\renewcommand{\@begintheorem}[2]{\it \trivlist
      \item[\hskip \labelsep{\bf #1\ #2{\rm :}}]}
\renewcommand{\@opargbegintheorem}[3]{\it \trivlist
      \item[\hskip \labelsep{\bf #1\ #2\ {\rm (#3)\/:}}]}
\def\@sect#1#2#3#4#5#6[#7]#8{\ifnum #2>\c@secnumdepth
     \def\@svsec{}\else
     \refstepcounter{#1}\edef\@svsec{\csname the#1\endcsname{.}\hskip 1em }\fi
     \@tempskipa #5\relax
      \ifdim \@tempskipa>\z@
        \begingroup #6\relax
          \@hangfrom{\hskip #3\relax\@svsec}{\interlinepenalty \@M #8\par}
        \endgroup
       \csname #1mark\endcsname{#7}\addcontentsline
         {toc}{#1}{\ifnum #2>\c@secnumdepth \else
                      \protect\numberline{\csname the#1\endcsname}\fi
                    #7}\else
        \def\@svsechd{#6\hskip #3\@svsec #8\csname #1mark\endcsname
                      {#7}\addcontentsline
                           {toc}{#1}{\ifnum #2>\c@secnumdepth \else
                             \protect\numberline{\csname the#1\endcsname}\fi
                       #7}}\fi
     \@xsect{#5}}
\newcommand{\Delete}[1]{}
\theoremstyle{plain}
\newtheorem{Thm}{Theorem}[section]
\newtheorem{Lem}[Thm]{Lemma}
\title{The competition number of a graph \\
with exactly two holes}
\author{
{\sc Jung Yeun LEE }
\hskip-1ex
\thanks{This work was supported by the Korea Research Foundation
Grant funded by the Korean Government (MOEHRD) (KRF-2007-313-C00012).}
\hskip1ex
\quad {\sc Suh-Ryung KIM}
\addtocounter{footnote}{-1}\footnotemark
 \\
[1ex]
Department of Mathematics Education, \\
Seoul National University, Seoul 151-742, Korea. \\
[1ex]
{\sc Seog-Jin KIM} \\
[1ex]
Department of Mathematics Education, \\
Konkuk University, Seoul 143-701, Korea. \\
[1ex]
{\sc Yoshio SANO}
\thanks{The author was supported by JSPS Research Fellowships
for Young Scientists.}
\thanks{Corresponding author. {\em email address}: sano@kurims.kyoto-u.ac.jp}
\\
[1ex]
Research Institute for Mathematical Sciences, \\
Kyoto University, Kyoto 606-8502, Japan.}
\date{}
\begin{document}

\maketitle

\begin{abstract}
Let $D$ be an acyclic digraph. The competition graph of $D$ is a
graph which has the same vertex set as $D$ and has an edge between
$x$ and $y$ if and only if there exists a vertex $v$ in $D$ such
that $(x,v)$ and $(y,v)$ are arcs of $D$. For any graph $G$, $G$
together with sufficiently many isolated vertices is the
competition graph of some acyclic digraph. The competition number
$k(G)$ of $G$ is the smallest number of such isolated vertices.

A hole of a graph is a cycle of length at least $4$ as an induced
subgraph. In 2005, Kim~\cite{compone} conjectured that the
competition number of a graph with $h$ holes is at most $h+1$.
Though Li and Chang~\cite{LC} and Kim {\it et al.}~\cite{lky}
showed that her conjecture is true when the holes do not overlap
much, it still remains open for the case where the holes share
edges in an arbitrary way. In order to share an edge, a graph must
have at least two holes and so it is natural to start with a graph
with exactly two holes. In this paper, the conjecture is proved
true for such a graph.
\end{abstract}

\noindent
{\bf Keywords:}
competition graph;
competition number;
hole

\section{Introduction}

Suppose $D$ is an acyclic digraph
(for all undefined graph-theoretical terms, see \cite{bo} and \cite{book}).
The {\em competition graph} of $D$, denoted by $C(D)$,
has the same vertex set as $D$ and has an edge between
vertices $x$ and $y$
if and only if there exists a vertex $v$ in $D$
such that $(x,v)$ and $(y,v)$ are arcs of $D$.
Roberts~\cite{cn} observed that, for any graph $G$, $G$ together with
sufficiently many isolated vertices is the competition graph of
an acyclic digraph.
Then he defined the {\em competition number} $k(G)$ of a graph $G$
to be the smallest number $k$ such that $G$ together with $k$
isolated vertices added is the competition graph of an acyclic digraph.

The notion of competition graph was introduced by Cohen~\cite{co}
as a means of determining the smallest dimension of ecological
phase space. Since then, various variations have been defined and
studied by many authors (see \cite{kimsu,lu} for surveys). Besides
an application to ecology, the concept of  competition graph can
be applied to a variety of fields, as summarized in \cite{RayRob}.

Roberts~\cite{cn} observed that characterization of competition
graphs is equivalent to computation of competition number. It does
not seem to be easy in general to compute $k(G)$ for a graph $G$,
as Opsut~\cite{op} showed that the computation of the competition
number of a graph is an NP-hard problem (see \cite{kimsu,kr} for
graphs whose competition numbers are known). 
It has been one of the important research problems 
in the study of competition graphs 
to determine the competition numbers 
that are possible for various graph classes. 
A cycle of length at least 4 of a graph as an induced subgraph 
is called a {\em hole} of the graph 
and a graph without holes is called a {\em chordal graph}. 
As Roberts~\cite{cn} showed that the competition number of 
a chordal graph is at most $1$, the competition number of a graph 
with $0$ holes is at most $1$. 
Cho and Kim~\cite{ck} and Kim~\cite{compone} 
studied the competition number of a graph with exactly one hole. 
Cho and Kim~\cite{ck} showed that 
the competition number of a graph with exactly $1$ hole is at most $2$. 

\begin{Thm}[Cho and Kim \cite{ck}]\label{comptwo}
Let $G$ be a graph with exactly one hole.
Then the competition number of $G$ is at most $2$.
\end{Thm}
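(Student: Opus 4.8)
The plan is to construct, from $G$, an acyclic digraph $D$ on the vertex set $V(G)\cup\{a_1,a_2\}$ whose competition graph is $G$ together with the two isolated vertices $a_1,a_2$; exhibiting such a $D$ is exactly what proves $k(G)\le 2$. I will use the standard reformulation of this task: fixing a topological ordering of $D$, in which every arc runs from a later vertex to an earlier one, is the same as fixing a linear order of $V(G)\cup\{a_1,a_2\}$ together with, for each vertex $u$, a clique $P_u$ of $G$ all of whose members follow $u$ in the order, in such a way that the cliques $P_u$ together form an edge clique cover of $G$. With $a_1$ and $a_2$ placed first, these two vertices contribute two ``free'' cliques, subject to no position constraint, whereas a vertex $v\in V(G)$ can host only a clique contained in the set of vertices that come after $v$. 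For a chordal graph this is precisely the data supplied by a perfect elimination ordering, with a single free clique needed; this is Roberts' theorem~\cite{cn} that a chordal graph has competition number at most $1$.

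Let $C=z_1z_2\cdots z_\ell z_1$, with $\ell\ge 4$, be the unique hole of $G$. The structural input is that $G-z_i$ is chordal for every $i$: any hole of the induced subgraph $G-z_i$ is an induced cycle of $G$ of length at least $4$, hence a hole of $G$, hence $C$ itself, which is impossible because $z_i\notin G-z_i$. Also, deleting a simplicial vertex $v$ of $G$ leaves a graph that still has exactly one hole --- namely $C$, since a hole vertex, having the nonadjacent vertices $z_{i-1},z_{i+1}$ among its neighbors, is never simplicial --- and such a deletion does not push the competition number above $2$ (a mild extension of the argument behind Roberts' bound). So I may first strip all simplicial vertices, which brings $G$ structurally very close to the cycle $C$. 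The plan is then to use the linear order $a_1,a_2,z_i$, followed by a perfect elimination ordering of $G-z_i$ for a conveniently chosen hole vertex $z_i$; to cover the edges incident to $z_i$ by at most two cliques, hosted on $a_1$ and $a_2$; and to let the perfect-elimination cliques of $G-z_i$, the first of them hosted on $z_i$ itself and the others on their predecessors in the ordering, cover all the remaining edges. Two free cliques really are needed at $z_i$, since $N_G(z_i)$ is never a single clique; the crux is to show that, after the preprocessing and with $z_i$ chosen well, the edges at $z_i$ decompose into exactly two cliques that can be fitted --- together with everything else --- into one acyclic order.

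The main obstacle is precisely this bookkeeping: producing a single linear order of $V(G)\cup\{a_1,a_2\}$ and a single edge clique cover of $G$ so that every clique lies among the successors of some vertex, distinct cliques go to distinct hosts, and only $a_1,a_2$ ever have to host a clique that cannot be placed inside $V(G)$. This is where the hypothesis that $C$ is the \emph{only} hole is essential: it severely restricts how the vertices outside $C$ attach to $C$ and to one another --- no new induced cycle of length $\ge 4$ is permitted --- and it is exactly this restriction that keeps two extra vertices enough instead of three. Carrying it out amounts to a careful case analysis of the local structure of $G$ near the hole, and that analysis is the bulk of the proof.
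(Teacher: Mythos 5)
This statement is quoted in the paper from Cho and Kim~\cite{ck}; the paper itself gives no proof of it, so the only question is whether your argument stands on its own --- and it does not, because it is a strategy outline whose central step is never carried out. You set up the standard reformulation (a linear order of $V(G)\cup\{a_1,a_2\}$ plus an edge clique cover with each clique hosted on an earlier ``prey'' vertex), observe that $G-z_i$ is chordal, and then say that the crux is to show that, for a well-chosen hole vertex $z_i$, the edges at $z_i$ split into two cliques that can be merged with a perfect elimination ordering of $G-z_i$ into one consistent acyclic assignment --- and you explicitly defer this (``that analysis is the bulk of the proof''). That deferred analysis \emph{is} the theorem: it is essentially the content of the entire Cho--Kim paper, and nothing in your write-up shows that such a $z_i$ exists, that $N_G(z_i)$ decomposes into two cliques after your preprocessing, or that the two cliques hosted on $a_1,a_2$ do not create extra edges or conflicts with the cliques coming from the elimination ordering of $G-z_i$ (recall each prey can host only one clique, and the competition graph must equal $G$ exactly, so over-covering is as fatal as under-covering).

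Two of your preparatory claims also need real proofs rather than assertions. First, ``deleting a simplicial vertex does not push the competition number above $2$'' is not a mild extension of Roberts' argument: if $D'$ realizes $k(G-v)\le 2$, the clique $N_G[v]$ needs a prey that is not already the head of other arcs, and the naive insertion of $v$ can clash with the existing assignment; this requires a lemma in the spirit of Lemma~\ref{indegree} (prescribing vertices of indegree $0$), not a one-line remark. Second, stripping simplicial vertices does not bring $G$ ``structurally very close to the cycle $C$'': every vertex of $X_C$ (adjacent to all of $C$) is non-simplicial and survives, as do various non-simplicial attachments along $C$, so the local structure near the hole remains the genuinely hard part. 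For comparison, the machinery this paper builds for the two-hole case suggests the cleaner route for one hole as well: either some hole edge $uv$ admits no hole-avoiding path, in which case one deletes $uv$ (reducing to a chordal graph) and glues the edge back via Theorem~\ref{chordalpart} and Lemma~\ref{indegree}; or every hole edge has an avoiding path, in which case $X_C\cup\{u,v\}$ is a clique vertex cut separating a chordal piece, again handled by Theorem~\ref{chordalpart}. Your direct ordering construction could in principle be pushed through, but as written the proof has a gap exactly where the difficulty lies.
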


Kim~\cite{compone} conjectured that the competition number of a graph
with $h$ holes is at most $h+1$ from these results.
Recently, Li and Chang~\cite{LC}
showed that her conjecture is true for a huge family of graphs.
In a graph $G$, a hole $C$ is {\em independent}
if the following two conditions hold for any other hole $C'$ of $G$,
\begin{itemize}
\item[{\rm (1)}]
$C$ and $C'$ have at most two common vertices.
\item[{\rm (2)}]
If $C$ and $C'$ have two common vertices, then they have one common edge
and $C$ is of length at least $5$.
\end{itemize}

\begin{Thm}[Li and Chang \cite{LC}]\label{LCThm}
Suppose that $G$ is a graph with exactly $h$ holes, all of which
are independent. Then $k(G) \leq h+1$.
\end{Thm}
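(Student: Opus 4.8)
The plan is to work with the standard reformulation of competition‑number upper bounds in terms of edge clique covers together with a vertex ordering. Recall that if one can exhibit an edge clique cover $\cF$ of $G$, a linear order $v_1\prec v_2\prec\cdots\prec v_n$ of $V(G)$, and an injection $\phi\colon\cF\to\{v_2,\dots,v_n,a_1,\dots,a_k\}$ (with $v_2\prec\cdots\prec v_n\prec a_1\prec\cdots\prec a_k$) such that every vertex of $C$ precedes $\phi(C)$ for each $C\in\cF$, then the digraph on $V(G)\cup\{a_1,\dots,a_k\}$ obtained by adding, for each $C\in\cF$, all arcs from the vertices of $C$ to $\phi(C)$ is acyclic and has competition graph $G$ together with the $k$ isolated vertices $a_1,\dots,a_k$ (each prey receives at most one clique, so no spurious competition edges appear, and each $a_i$ has out‑degree $0$); hence $k(G)\le k$. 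So it suffices, given a graph $G$ with $h$ independent holes, to produce an ordering of $V(G)$ together with an edge clique cover that can be ``scheduled'' into the free slots $v_2,\dots,v_n$ with at most $h+1$ cliques left over.

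To quantify ``left over'', fix the ordering $v_1,\dots,v_n$ and, for each $i$, write $S_i:=N_G(v_i)\cap\{v_1,\dots,v_{i-1}\}$ for the back‑neighbourhood of $v_i$ and let $f_i$ be the least number of cliques of $G[S_i]$ needed to cover $V(S_i)$; each such clique, enlarged by $v_i$, is a clique of $G$, and these cliques over all $i$ form an edge clique cover. The cliques produced at $v_i$ must be matched to slots strictly after $v_i$; since both sides are linearly nested, the deficiency form of Hall's theorem shows that the number of cliques forced onto the new vertices equals $\max_{1\le j\le n}\big(\sum_{i=j}^{n} f_i-(n-j)\big)^{+}$. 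When $f_i=1$ at every non‑isolated vertex (the chordal case, via a perfect elimination ordering) this maximum is $1$, recovering Roberts' bound $k(G)\le 1$; in general it is at most $1+\sum_i (f_i-1)^{+}$. Thus the theorem reduces to the combinatorial claim that \emph{a graph with $h$ independent holes admits a vertex ordering with $\sum_i (f_i-1)^{+}\le h$} --- an ordering that is ``simplicial with respect to its predecessors'' at every vertex except for a total excess of $h$ distributed among the holes.

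For this claim I would induct on $h$, the base cases $h\le 1$ being Roberts' chordal bound ($h=0$) and Theorem~\ref{comptwo} ($h=1$). Given $G$ with $h\ge 2$ independent holes, pick one hole $C$; since $C$ meets every other hole in at most two vertices, and in two vertices only along a single common edge with $|C|\ge 5$, I would analyse how $C$ attaches to the rest of $G$ and choose a ``tip'' vertex $x\in C$ together with a position for it so that (i) $G-x$ has exactly $h-1$ holes, still pairwise independent; (ii) inserting $x$ at that position leaves the back‑neighbourhood requirement of every other vertex unchanged; and (iii) the back‑neighbourhood of $x$ itself is coverable by just two cliques (its two hole‑neighbours, which are non‑adjacent, forcing exactly one extra clique). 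Inserting $x$ into an ordering of $G-x$ given by the induction hypothesis then produces an ordering of $G$ whose total excess has increased by exactly one. The main obstacle is precisely this structural step: arranging a single global ordering that accommodates all $h$ holes with excess only one apiece --- equivalently, verifying that deleting the tip of an independent hole neither creates a new hole, nor destroys the independence of the surviving holes, nor inflates any other back‑neighbourhood, and that the tip can be positioned to cost only one extra clique. This is exactly where the two independence hypotheses are needed: the bound of two shared vertices keeps the orderings that different holes ``want'' around themselves from conflicting, and the length‑at‑least‑$5$ requirement in the edge‑sharing case supplies the slack to resolve the single conflict that can arise.
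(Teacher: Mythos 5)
This theorem is quoted by the paper from Li and Chang \cite{LC}; the paper itself gives no proof, so your proposal can only be judged on its own terms. The general framework you set up is sound and standard: covering the back-neighbourhood $S_i$ of each vertex by $f_i$ cliques, scheduling the resulting edge clique cover into later slots via the deficiency form of Hall's theorem, and bounding the number of extra prey by $1+\sum_i(f_i-1)^{+}$ is a correct reduction, and it does recover Roberts' chordal bound. But the theorem has not been proved, because the entire content of the result is the combinatorial claim you reduce it to --- that a graph with $h$ independent holes admits an ordering with total excess at most $h$ --- and that claim is only gestured at. Your induction has two concrete gaps. First, the base case $h=1$ is not available in the form you need: Theorem~\ref{comptwo} gives $k(G)\le 2$, which is a statement about competition numbers, not the existence of an ordering with $\sum_i(f_i-1)^{+}\le 1$; the implication you established runs only from orderings to competition numbers, so you cannot feed Cho--Kim's theorem into your inductive hypothesis. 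Second, the inductive step rests on choosing a ``tip'' vertex $x$ of a hole satisfying (ii) and (iii), and neither is justified: if $x$ is inserted anywhere except at the very end, every later neighbour of $x$ acquires $x$ in its back-neighbourhood and its $f$-value may rise, while if $x$ is placed last, you need $\theta(G[N_G(x)])\le 2$, which simply need not hold --- a hole vertex can have arbitrarily many pairwise non-adjacent neighbours outside the hole (e.g.\ pendant or otherwise independent attachments) without creating any new hole, so no vertex of the chosen hole need have the property you require. You acknowledge this is ``the main obstacle,'' and indeed it is exactly where the published proofs (Cho--Kim for one hole, Li--Chang here, and Kim--Lee--Sano in Theorem~\ref{edge-dis}) do all their work, via a careful structural analysis of how a hole attaches to the rest of the graph; that analysis is absent from the proposal.

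(As a minor point in your favour, condition (i) is essentially automatic: deleting a vertex cannot create holes, since a hole of $G-x$ is an induced cycle of $G$, and the surviving holes remain independent; but ``exactly $h-1$'' should be ``at most $h-1$.'' The genuine missing content is the existence and placement of the tip vertex.)
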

After then, Kim, Lee, and Sano~\cite{lky} generalized 
the above theorem to the following theorem. 

\begin{Thm}[Kim {\it et al.} \cite{lky}]\label{edge-dis}
Let $C_1$, \ldots, $C_h$ be the holes of a graph $G$.
Suppose that
\begin{itemize}
\item[{\rm (1)}]
each pair among $C_1$, \ldots, $C_h$ share at most one edge, and
\item[{\rm (2)}]
if $C_i$ and $C_j$ share an edge,
then both $C_i$ and $C_j$ have length at least $5$.
\end{itemize}
Then $k(G) \leq h+1$.
\end{Thm}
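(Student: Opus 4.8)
I would use the standard reformulation of the competition number: $k(G)\le m$ exactly when there are an ordering $v_1,\dots,v_n$ of $V(G)$ and an edge clique cover of $G$ whose members can be assigned injectively to vertices so that the clique assigned to $v_i$ lies inside $\{v_1,\dots,v_{i-1}\}$, with $m$ extra ``sink'' vertices available to receive the leftover cliques. For a chordal graph a reverse perfect elimination ordering, together with the cliques $S_i=\{v_i\}\cup\bigl(N(v_i)\cap\{v_1,\dots,v_{i-1}\}\bigr)$ assigned to $v_2,\dots,v_n$ and one sink, gives $k\le 1$; the theorem says that each hole forces only one extra sink.

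The plan is an induction on $h$, with base cases $h\le 1$ given by Roberts' bound for chordal graphs and by Theorem~\ref{comptwo}. The inductive step rests on the elementary inequality $k(H)\le k(H-e)+1$: from an acyclic digraph realizing $H-e$ with $k(H-e)$ isolated vertices, adjoin one new sink $b$ together with the two arcs pointing to $b$ from the endpoints of $e$; this restores the edge $e$ in the competition graph and adds $b$ as an isolated vertex. So it is enough, given $G$ with holes $C_1,\dots,C_h$, to delete from some hole an edge $e$ for which the holes of $G-e$ are precisely the remaining $h-1$ holes; conditions (1) and (2) are inherited by this sub-collection, so the induction hypothesis gives $k(G)\le k(G-e)+1\le h+1$. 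Here one must be careful: if $e=xy$ lies in only the hole $C$, the one way $G-e$ can acquire a new hole is that $x$ and $y$ have a common neighbour $t$ adjacent to no other vertex of $C$ (so that replacing $xy$ by the path $x,t,y$ turns $C$ into an induced cycle of $G-e$), while deleting an edge lying in two holes can instead merge them into a longer hole. Neither deletion is automatically harmless, so the substance of the proof is in choosing which edge, or which small set of edges, to delete.

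I expect this choice to be the crux, and the only place where the hypotheses are really used. Condition (2) guarantees that any two holes sharing an edge are both of length at least $5$, so that no $4$-cycle is involved when two holes share an edge, while condition (1) bounds the number of shared edges (though not of shared vertices); and crucially $C_1,\dots,C_h$ are assumed to be \emph{all} the holes of $G$. I would pick a hole $C$ and an edge $e\in E(C)$ extremal for some measure of how $C$ overlaps the other holes, and argue that a hole newly created in $G-e$ would either have been an induced cycle of $G$ absent from the list, or would violate the extremal choice or condition (1). When $C$ has only ``bad'' edges of the two types above, the fallback is to delete one edge from each hole of a maximal cluster of pairwise edge-sharing holes simultaneously, which drops the hole count by the size of the cluster; bounding the holes this creates, again via (1) and (2), is the same obstacle in another form. (Equivalently one can bypass the induction and build the digraph directly from an edge clique cover together with an ``almost perfect elimination'' ordering in which each hole contributes a single defect vertex whose back-neighbourhood is covered by at most two cliques; proving, from (1) and (2), that the defect vertices of distinct holes may be chosen distinct and that two cliques always suffice is the same difficulty yet again.)
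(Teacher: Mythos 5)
There is a genuine gap, and you have in fact named it yourself: everything in your outline up to ``the substance of the proof is in choosing which edge\dots to delete'' is routine (the clique-cover reformulation, Roberts' bound for chordal graphs, and the inequality $k(G)\le k(G-e)+1$ obtained by adding one sink receiving arcs from the two endpoints of $e$ are all standard and correct), but the step that carries the entire content of the theorem --- exhibiting an edge $e$ on some hole such that the holes of $G-e$ are exactly the remaining ones, or controlling the holes that deletion creates --- is left as a declared intention. Deleting an edge of a hole need not decrease the number of holes at all: if $e=xy$ lies on $C$ and $x,y$ have a common neighbour $t$ adjacent to no other vertex of $C$, then $G-e$ loses $C$ but gains the longer induced cycle $(C-e)+xty$, and that new hole may share two or more edges with other holes or have length issues, so neither the hole count drops nor are hypotheses (1) and (2) inherited; your fallback of deleting one edge per hole in a cluster faces the same unresolved problem. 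Moreover, the proposal never actually deploys hypotheses (1) and (2) --- it only asserts that they should be ``crucial'' --- so the part of the argument where the theorem could fail without them is exactly the part that is missing.

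For comparison: this paper does not prove Theorem~\ref{edge-dis} at all; it is imported from Kim, Lee, and Sano \cite{lky}, where the bound is obtained by an explicit construction (a careful vertex ordering together with an edge clique cover in which each hole contributes a single extra vertex), not by the edge-deletion induction you sketch. The analogues of the lemmas in Section~2 here (e.g.\ Lemma~\ref{wheel1} and the vertex-cut/decomposition arguments of Lemma~\ref{avoid2} combined with Theorem~\ref{chordalpart}) indicate the kind of structural work needed to control what happens near a hole; your outline would need a result of comparable strength at the marked step before it could be called a proof.
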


Thus, it is natural to ask if the bound holds when the holes share 
arbitrarily many edges. In this paper, we show that the answer 
is yes for a graph $G$ with exactly two holes. Our main theorem is
as follows.

\begin{Thm}\label{compthree}
Let $G$ be a graph with exactly two holes.
Then the competition number of $G$ is at most $3$.
\end{Thm}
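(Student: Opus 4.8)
The plan is to construct, for a graph $G$ with exactly two holes, an acyclic digraph $D$ such that $C(D) = G \cup I_3$, where $I_3$ denotes three isolated vertices. The standard tool is the notion of an \emph{edge clique cover}: if one can order the vertices $v_1, \dots, v_n$ of $G$ and assign to each $v_i$ a (possibly empty) clique $S_i$ of $G$ contained in the neighborhood $\{v_j : j < i\}$ of earlier vertices, such that every edge of $G$ lies in some $S_i$, then adding arcs from each vertex of $S_i$ into $v_i$ (and using $k$ extra isolated vertices as ``sinks'' for the first few cliques that cannot be anchored) produces an acyclic digraph whose competition graph is $G$ together with $k$ isolated vertices. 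The number $k$ needed is governed by how many cliques in the family must be assigned to fresh sink vertices rather than to vertices of $G$; concretely, it suffices to find a family of $n+k$ cliques covering all edges, together with a ``double-ordering'' (an injection into $V(G) \cup I_k$ following the acyclic order) in which each clique precedes its assigned vertex. So the whole problem reduces to a careful combinatorial construction of such a clique cover and ordering with $k \le 3$.

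First I would set up notation for the two holes. Call them $C$ and $C'$. The key structural fact is that $C \cap C'$, the subgraph induced on their common vertices, is a disjoint union of paths (it cannot contain a hole since $G$ has only two, and it is an induced subgraph of the cycle $C$, hence a union of paths and isolated vertices). So $C \cup C'$ is a ``theta-like'' configuration: going around $C$, the shared portions appear as a cyclic sequence of arcs alternating between paths that lie only on $C$, paths that lie only on $C'$, and paths shared by both. I would next observe that $G$ minus the edges of $C$ and $C'$—more precisely, $G$ with the ``hole edges'' suitably deleted—is chordal, since deleting edges to kill both holes leaves no holes. By Theorem 1.1's underlying mechanism (a chordal graph has a simplicial elimination ordering, giving an edge clique cover of size $n$ that can be anchored with one extra vertex), the chordal part costs only one isolated vertex. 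The remaining task is to cover the edges of $C$ and $C'$ that were deleted, and to splice this into the elimination ordering without creating a directed cycle.

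The heart of the argument is therefore a local analysis around $C \cup C'$. I would pick a vertex to start the elimination ordering at—ideally a vertex on $C$ but not on $C'$, if one exists, or else handle the degenerate case where $C'$ contains all of $C$'s non-shared structure—and then traverse the holes, at each step peeling off a vertex and recording the clique ($=$ an edge, since hole vertices are typically of low degree within the hole) needed to cover the hole-edge incident to it. Each hole, treated in isolation, is a single cycle of length $\ge 4$ and contributes essentially one ``deficiency'' that must be absorbed by an isolated vertex (this is the $k(C_n) = 2$ phenomenon: a single hole needs one extra vertex beyond the chordal baseline). Two holes would naively suggest $2$ extra vertices on top of the $1$ for the chordal part, giving $3$. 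The subtlety—and the main obstacle—is the shared edges: when $C$ and $C'$ share a path, the clique covering a shared edge can potentially serve both holes, which is good, but the acyclicity constraint can force an extra isolated vertex precisely at the ``junction'' vertices where a shared path meets a non-shared one. The hard part will be the case analysis at these junctions: showing that no matter how the two holes overlap (one shared path, several shared paths, one hole containing a long arc of the other, holes of length exactly $4$ where every vertex is a junction, etc.), the construction can be arranged so that the total number of cliques that must be anchored at fresh sinks never exceeds $3$. I expect this to require splitting into a handful of configurations based on the number of shared paths and the lengths of the non-shared arcs, with a uniform ``budget of $3$'' argument in each; the length-$4$ holes and the case of holes sharing two disjoint paths are the ones most likely to be tight and to need the most delicate ordering.
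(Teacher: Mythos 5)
There is a genuine gap: what you have written is a strategy outline, and the two places where you wave your hands are exactly where the theorem's difficulty lives. First, your claim that ``$G$ with the hole edges suitably deleted is chordal, since deleting edges to kill both holes leaves no holes'' is false in general. Deleting an edge $uv$ destroys the holes through $uv$, but it can \emph{create} new holes, namely cycles of which $uv$ was the unique chord. Controlling exactly this phenomenon is the content of the paper's Lemma~\ref{noavoid2} (if there is no $C_t$-avoiding path between the endpoints of a hole edge $uv$, then $G-uv$ has at most one hole) and, in the complementary situation, of Lemma~\ref{avoid2}, whose long argument shows that $X_1\cup\{v_j,v_{j+1}\}$ is a vertex cut separating a one-hole piece from a chordal piece. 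Without an argument of this kind your ``chordal baseline plus one deficiency per hole'' accounting has no foundation: after deleting hole edges you may face new holes whose cliques again cannot be anchored, and nothing in your sketch bounds the resulting number of sinks by $3$.

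Second, you explicitly defer ``the heart of the argument'' -- the case analysis at the junctions, including length-$4$ holes and holes sharing two disjoint paths -- to future work, but this is the entire proof, not a finishing touch. Note also that the configuration ``two holes sharing two disjoint paths of edges'' cannot occur when $G$ has exactly two holes; this is Lemma~\ref{path} in the paper (the common edges induce a single path), and it requires a nontrivial proof rather than being a case one can simply list. The paper's actual route avoids building the digraph by hand: it splits on whether every edge of $C_1$ admits a $C_1$-avoiding path, in one case removes a single hole edge and invokes the one-hole bound $k\le 2$ (Theorem~\ref{comptwo}) plus the gluing result Theorem~\ref{chordalpart}, and in the other case uses the vertex-cut decomposition of Lemma~\ref{avoid2} followed by the same two results (the edge-disjoint case being Theorem~\ref{edge-dis}). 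Your proposal would need, at minimum, a correct substitute for Lemmas~\ref{path}, \ref{noavoid2} and \ref{avoid2} before the budget-of-$3$ construction could be carried out.
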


This paper is organized as follows.
In Section 2, we investigate  some properties of graphs with holes. 
In Section 3, we give a proof of Theorem \ref{compthree}.

\section{Preliminaries}

A set $S$ of vertices of a graph $G$ is called a {\em clique} of
$G$ if the subgraph of $G$ induced by $S$ is a complete graph. A
set $S$ of vertices of a graph $G$ is called a {\em vertex cut} of
$G$ if the number of connected components of $G-S$ is greater than
that of $G$.

Cho and Kim~\cite{ck} showed that for a chordal graph $G$, we can
construct an acyclic digraph $D$ with as many vertices of indegree 
$0$ as there are vertices in a clique so that the 
competition graph of $D$ is $G$ with one more isolated vertex: 

\begin{Lem}[\cite{ck}]\label{indegree}
If $X$ is a clique of a chordal graph $G$, then there exists an
acyclic digraph $D$ such that $C(D)=G \cup \{ i \}$ where $i$ is
an isolated vertex, and the vertices of $X$ have only outgoing
arcs in $D$.
\end{Lem}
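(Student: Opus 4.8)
\section*{Proof proposal}

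The plan is to read off a digraph from a perfect elimination ordering of $G$ that has been arranged to end with the clique $X$. Recall that a \emph{perfect elimination ordering} of a chordal graph is a labelling $v_1,\dots,v_n$ of $V(G)$ for which each $v_i$ is simplicial in $G[\{v_i,\dots,v_n\}]$; equivalently, setting $N^+(v_i):=\{v_j : j>i,\ v_iv_j\in E(G)\}$, the set $K_i:=\{v_i\}\cup N^+(v_i)$ is a clique for every $i$. First I would show that such an ordering exists with $X=\{v_{n-|X|+1},\dots,v_n\}$. This follows by repeatedly deleting a simplicial vertex lying outside $X$: an induced subgraph of $G$ is again chordal, and a chordal graph that is not complete has two non-adjacent simplicial vertices, of which at most one can lie in the clique $X$; hence a simplicial vertex outside $X$ exists at each step until only $X$ remains, and $G[X]$ is complete so its vertices may be listed arbitrarily at the end.

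Next I would determine which cliques suffice to cover $E(G)$. Since every edge $v_av_b$ with $a<b$ lies in $K_a$, the cliques $K_1,\dots,K_n$ cover all edges. With $X$ placed last, however, every vertex of index greater than an $X$-vertex again lies in the clique $X$, so for $v_i\in X$ we get $N^+(v_i)=\{v_{i+1},\dots,v_n\}$ and thus $K_i\subseteq K_{m+1}=X$, where $m:=n-|X|$. Consequently every edge of $G$ already lies in one of $K_1,\dots,K_{m+1}$, and the cliques based at the remaining $X$-vertices are redundant. This is the crux of the argument, as it is what will free the vertices of $X$ from ever serving as prey.

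Then I would construct $D$ on vertex set $V(G)\cup\{z\}$, where $z$ is a single new vertex, by adding, for each $i=1,\dots,m+1$, an arc from every vertex of $K_i$ to a prey $p_i$, with $p_1:=z$ and $p_i:=v_{i-1}$ for $2\le i\le m+1$. Three verifications remain. For acyclicity, each arc either ends at $z$ or strictly lowers the index (a vertex of $K_i$ has index $\ge i$ and points to $v_{i-1}$), so $z,v_1,\dots,v_n$ is a topological order. For the competition graph, the preys $z,v_1,\dots,v_m$ are pairwise distinct and $p_i$ receives arcs only from $K_i$, so two vertices compete exactly when they lie in a common $K_i$; as each $K_i$ is a clique and the chosen $K_i$ cover $E(G)$, the competing pairs among $V(G)$ are precisely $E(G)$, while $z$ has no outgoing arc and is therefore isolated, giving $C(D)=G\cup\{z\}$. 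For the source condition, every prey lies in $\{z,v_1,\dots,v_m\}$, which is disjoint from $X=\{v_{m+1},\dots,v_n\}$, so no vertex of $X$ receives an arc and the vertices of $X$ have only outgoing arcs.

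The main obstacle is meeting the source requirement on $X$ and the bound of a single extra vertex at the same time. The naive one-vertex construction, which makes $K_i$ prey on $v_{i-1}$ for \emph{every} $i$, forces all but one original vertex to receive an arc and so cannot keep $X$ sourceless. Pushing $X$ to the end of the elimination ordering and discarding the cliques interior to $X$ is precisely what lets the prey set avoid $X$, while the single new vertex $z$ absorbs the one clique $K_1$ that has no lower-indexed vertex available as prey. The final point needing care is that reusing the original vertices $v_1,\dots,v_m$ as preys introduces no spurious competition edges, which is guaranteed by the injectivity of $i\mapsto p_i$.
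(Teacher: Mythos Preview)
The paper does not prove this lemma; it is quoted from \cite{ck} without argument, so there is no in-paper proof to compare against. Your approach---placing the clique $X$ at the end of a perfect elimination ordering and letting each clique $K_i=\{v_i\}\cup N^+(v_i)$ prey on the previously eliminated vertex (with one new vertex $z$ as the prey of $K_1$)---is the standard construction and is essentially correct. The key observation that $K_i\subseteq X=K_{m+1}$ for $i>m+1$, so that no vertex of $X$ ever needs to serve as prey, is exactly the right idea.

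One small gap: when arguing that a perfect elimination ordering ending in $X$ exists, you invoke Dirac's theorem that a non-complete chordal graph has two non-adjacent simplicial vertices. But the remaining induced subgraph could become complete while still properly containing $X$ (e.g.\ $G=K_4$, $X$ a $2$-clique). In that case your stated argument does not apply; of course every vertex of a complete graph is simplicial, so you may still pick one outside $X$. Adding this one-line case makes the argument airtight.
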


\begin{Thm}\label{chordalpart}
Let $G$ be a graph and $k$ be a non-negative integer.
Suppose that $G$ has a subgraph $G_1$ with $k(G_1) \leq k$
and a chordal subgraph $G_2$
such that $E(G_1) \cup E(G_2) =E(G)$
and $X:=V(G_1) \cap V(G_2)$ is a clique of $G_2$.
Then $k(G) \le k+1$.
\end{Thm}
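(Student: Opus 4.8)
The plan is to glue a witness digraph for $G_1$ and a witness digraph for the chordal graph $G_2$ along their common clique $X$. Since $k(G_1)\le k$, there is an acyclic digraph $D_1$ with $C(D_1)=G_1\cup I_k$ for a set $I_k$ of exactly $k$ isolated vertices (if $k(G_1)<k$, simply adjoin $k-k(G_1)$ new vertices bearing no arcs). Since $G_2$ is chordal and $X$ is a clique of $G_2$, Lemma~\ref{indegree} provides an acyclic digraph $D_2$ with $C(D_2)=G_2\cup\{i\}$, where $i$ is one further isolated vertex, in which every vertex of $X$ has only outgoing arcs. Relabelling if necessary, we may take $I_k$ and $i$ to be distinct and outside $V(G)$, so that $V(D_1)\cap V(D_2)=V(G_1)\cap V(G_2)=X$. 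We may also assume $V(G_1)\cup V(G_2)=V(G)$: any vertex of $G$ lying in neither is isolated in $G$ and may be appended to $G_2$ as an isolated vertex, which preserves chordality and all the hypotheses.

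Now let $D$ be the digraph with $V(D)=V(G)\cup I_k\cup\{i\}$ and $A(D)=A(D_1)\cup A(D_2)$. The structural point used throughout is that, since every vertex of $X$ has only outgoing arcs in $D_2$, no arc of $D_2$ has its head in $X$; in particular $A(D_1)\cap A(D_2)=\emptyset$, because an arc common to both would have both endpoints in $V(D_1)\cap V(D_2)=X$. Next I would verify $C(D)=G\cup I_k\cup\{i\}$. Distinct vertices $x,y$ are adjacent in $C(D)$ exactly when they have a common out-neighbour $v$ in $D$: if both arcs $(x,v),(y,v)$ lie in $A(D_1)$ then $xy\in E(G_1)$; if both lie in $A(D_2)$ then $xy\in E(G_2)$; and the mixed case, say $(x,v)\in A(D_1)$ and $(y,v)\in A(D_2)$, is impossible, since it would force $v\in V(D_1)\cap V(D_2)=X$, contradicting that $v$, being the head of a $D_2$-arc, cannot lie in $X$. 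Hence $E(C(D))=E(G_1)\cup E(G_2)=E(G)$ while the $k+1$ added vertices stay isolated, so $C(D)=G\cup I_{k+1}$ with $|I_{k+1}|=k+1$.

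The step I expect to be the main obstacle is showing that $D$ is acyclic. Since $D_1$ and $D_2$ are acyclic and $A(D_1)\cap A(D_2)=\emptyset$, any directed cycle of $D$ must use arcs from both $A(D_1)$ and $A(D_2)$; reading its arcs in cyclic order, some $D_2$-arc is immediately followed by a $D_1$-arc, so there is a vertex $v$ on the cycle that is entered by an arc of $A(D_2)$ and left by an arc of $A(D_1)$. The first fact gives $v\in V(D_2)$ with positive indegree in $D_2$, hence $v\notin X$; the second gives $v\in V(D_1)$. Therefore $v\in V(D_1)\cap V(D_2)=X$, a contradiction. So $D$ is acyclic, and together with $C(D)=G\cup I_{k+1}$ this yields $k(G)\le k+1$.

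Note how essential Lemma~\ref{indegree} is: making the vertices of the gluing clique $X$ into sources in $D_2$ is precisely what eliminates the mixed common out-neighbour in the competition-graph computation and the transition vertex in the acyclicity argument; without that property the two digraphs could fuse into a directed cycle and neither step would go through.
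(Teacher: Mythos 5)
Your proposal is correct and follows essentially the same route as the paper: both constructions take a witness digraph for $G_1$, apply Lemma~\ref{indegree} to $G_2$ with the clique $X$ as sources, union the arc sets, and use the fact that no vertex of $X$ is the head of a $D_2$-arc to rule out both extra competition edges and directed cycles. Your write-up merely makes explicit a couple of points the paper leaves implicit (padding $I_k$ when $k(G_1)<k$, absorbing vertices outside $V(G_1)\cup V(G_2)$, and the detailed acyclicity argument), which is fine.
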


\begin{proof}
Since $k(G_1)\le k$, there exists an acyclic digraph $D_1$ such
that $C(D_1)=G_1 \cup I_k$ where $I_k$ is a set of $k$ isolated
vertices with $I_k \cap V(G)= \emptyset$. Since $X$ is a clique of
a chordal graph $G_2$, there exists an acyclic digraph $D_2$ such
that $C(D_2)=G_2 \cup \{ a \}$ where $a$ is an isolated vertex not
in $V(G) \cup I_k$ and that the vertices in $X$ have only outgoing arcs
in $D_2$ by Lemma~\ref{indegree}. Now we define a digraph $D$ as
follows: $V(D)=V(D_1) \cup V(D_2)$ and $A(D)=A(D_1) \cup A(D_2)$.

Suppose that there is an edge in $E(C(D))$ but not in $E(C(D_1))
\cup E(C(D_2))$. Then there exist an arc $(u,x)$ in $D_1$ and an
arc $(v,x)$ in $D_2$ for some $x \in X$. However, this is
impossible since every vertex in $X$ has indegree $0$ in $D_2$.
Thus $E(C(D)) \subseteq E(C(D_1)) \cup E(C(D_2))$.
It is obvious that $E(C(D)) \supseteq E(C(D_1)) \cup E(C(D_2))$
since $E(C(D)) \supseteq E(C(D_i))$ for $i=1, 2$.
Thus 
\[
E(C(D)) = E(C(D_1)) \cup E(C(D_2))=E(G_1) \cup E(G_2)=E(G). 
\]
Hence $C(D)=G \cup I_k \cup \{ a \}$.
Moreover, since $D_1$ and $D_2$ are acyclic, 
$V(G_1) \cap V(G_2) = X$, 
and each vertex in $X$ has only outgoing arcs in $D_2$, it follows 
that $D$ is also acyclic. Hence $k(G) \leq k+1$. 
\end{proof}

\begin{Lem}[\cite{lky}]\label{wheel1}
Let $G$ be a graph and $C$ be a hole of $G$. Suppose that $v$ is a 
vertex not on $C$ that is adjacent to two non-adjacent vertices 
$x$ and $y$ of $C$. Then exactly one of the following is true: 
\begin{itemize}
\item[{\rm (1)}]
$v$ is adjacent to all the vertices of $C$;
\item[{\rm (2)}]
$v$ is on a hole $C^*$ different from $C$ such that there are
at least two common edges of $C$ and $C^*$
and all the common edges are contained in
exactly one of the $(x,y)$-sections of $C$.
\end{itemize}
\end{Lem}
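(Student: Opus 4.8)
\emph{Approach.} The statement is an exclusive ``or'', so the plan is in two parts: (i) show (1) and (2) cannot hold simultaneously, and (ii) show at least one always does. Part (i) is immediate: if $v$ were adjacent to every vertex of $C$ and also lay on a hole $C^{*}$ meeting $C$ in an edge $uw$, then $uw$, $vu$, $vw$ would all be edges of $G$ (the first because $uw\in E(C)$, the other two because $v$ dominates $C$), so $\{u,v,w\}\subseteq V(C^{*})$ would be a triangle inside the induced cycle $C^{*}$ — impossible for a cycle of length $\ge 4$. Hence the substance is part (ii): assuming $v$ is \emph{not} adjacent to all of $C$, produce the hole $C^{*}$ demanded by (2).

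\emph{Constructing $C^{*}$.} Put $N:=N_{G}(v)\cap V(C)$. Since $x,y\in N$ are distinct, $|N|\ge 2$; since (1) fails, $N\ne V(C)$, so the vertices of $C$ \emph{not} adjacent to $v$ form a nonempty set, which breaks into maximal runs of consecutive vertices along $C$. I would fix one such run $z_{1}z_{2}\cdots z_{k}$ with $k\ge 1$, let $p$ and $q$ be the vertices of $C$ immediately preceding $z_{1}$ and immediately following $z_{k}$ (so $p,q\in N$ by maximality), and take
\[
C^{*}\;:=\;v\,p\,z_{1}z_{2}\cdots z_{k}\,q\,v .
\]
This is visibly a closed walk in $G$, and it remains to check it is an induced cycle of length $\ge 4$. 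The two collapses $p=q$ and $p\sim q$ are excluded identically: either forces $N\subseteq\{p,q\}$, and then $p=q$ gives $|N|\le 1$ while $p\sim q$ gives $N=\{p,q\}=\{x,y\}$ hence $x\sim y$ — both contrary to hypothesis. Once $p\ne q$ and $p\not\sim q$ are in hand, $C^{*}$ is a genuine cycle and (since then $C$ must have a vertex outside the arc $p\,z_{1}\cdots z_{k}\,q$) it is chordless: the potential chords $vz_{i}$ fail because the $z_{i}$ are non-neighbours of $v$, the pair $\{p,q\}$ fails by assumption, and every other pair of vertices of $C^{*}$ is a pair of non-consecutive vertices of the induced cycle $C$. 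So $C^{*}$ is a hole (length $k+3\ge 4$); it contains $v\notin V(C)$, so $C^{*}\ne C$; and the edges of $C^{*}$ that lie on $C$ are exactly the $k+1\ge 2$ consecutive edges $pz_{1},z_{1}z_{2},\dots,z_{k}q$, forming a single subpath of $C$.

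\emph{The section clause, and the main difficulty.} To finish I would observe that the interior vertices $z_{1},\dots,z_{k}$ of that shared subpath lie outside $N\supseteq\{x,y\}$, so none of them is $x$ or $y$; since two consecutive edges of $C$ lie in different $(x,y)$-sections only when they meet at $x$ or $y$, the whole shared subpath — in particular every common edge of $C$ and $C^{*}$ — lies in exactly one of the two $(x,y)$-sections, which is the last clause of (2). The only genuinely delicate point in the argument is the degenerate-case bookkeeping showing $C^{*}$ is chordless and has length at least $4$; this is precisely where the hypothesis is used in full — it is ``$x,y$ are two distinct non-adjacent vertices of $C$'' together with ``$C$ is an induced cycle of length $\ge 4$'' that kills the collapses $p=q$ and $p\sim q$ and the presence of stray chords.
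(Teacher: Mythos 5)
Your argument is correct: the mutual exclusivity of (1) and (2) via the triangle inside the hole $C^{*}$ is sound, and the construction of $C^{*}$ from a maximal run $z_{1}\cdots z_{k}$ of non-neighbours of $v$ on $C$, closed off through $v$ via the bounding neighbours $p,q$, does produce a hole with the required properties. The delicate points are all handled: $p=q$ contradicts $|N|\ge 2$; $p\sim q$ (which, $C$ being induced, forces $p$ and $q$ consecutive on $C$) contradicts the non-adjacency of $x$ and $y$; chordlessness follows because the arc $p\,z_{1}\cdots z_{k}\,q$ is a proper subpath of the induced cycle $C$ and the $z_{i}$ are non-neighbours of $v$; and the section clause follows since the internal vertices of the shared subpath avoid $\{x,y\}$, so its edges cannot straddle the two $(x,y)$-sections.

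There is, however, nothing in this paper to compare your proof against: Lemma~\ref{wheel1} is stated here as a quoted result of Kim, Lee, and Sano~\cite{lky} and is not proved in this manuscript. So your write-up is best viewed as a self-contained substitute for the external citation; it is a natural and complete argument for the statement, and no gap needs to be filled.
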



For a graph $G$ and a hole $C$ of $G$, we denote by $X_C$ the set
of vertices which are adjacent to all vertices of $C$.
Note that $V(C) \cap X_C = \emptyset$.
Given a walk
$W$ of a graph $G$, we denote by $W^{-1}$ the walk represented by
the reverse of vertex sequence of $W$.
For a graph $G$ and a hole $C$ of $G$, we call a walk
(resp.\ path) $W$ a {\em $C$-avoiding walk} (resp. {\em
$C$-avoiding path}) if one of the following holds: 
\begin{itemize}
\item{}
$|E(W)| \geq 2$ and
none of the internal vertices of $W$ are in $V(C) \cup X_C$;
\item{}
$|E(W)|=1$ and one of the two vertices of $W$ is not in $V(C) \cup X_C$.
\end{itemize}

The following lemma immediately follows from Lemma~\ref{wheel1}.

\begin{Lem}\label{avoidingvertex}
Let $G$ be a graph and $C$
be a hole of $G$. Suppose that there exists a vertex $v$
such that
$v$ is adjacent to consecutive vertices $v_i$ and $v_{i+1}$ of $C$,
and that
$v$ is not on $X_C$ and not on any hole of $G$.
Then, if there is a $C$-avoiding path $P$ from $v$ to a vertex in
$V(C) \setminus \{v_i,v_{i+1}\}$, then $P$ has length at least $2$.
\end{Lem}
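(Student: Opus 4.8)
The plan is to argue by contradiction, reducing the statement to a direct application of Lemma~\ref{wheel1}. Suppose that the $C$-avoiding path $P$ from $v$ to some $w \in V(C) \setminus \{v_i, v_{i+1}\}$ has length $1$; that is, $P$ is the single edge $vw$. (Note that such a path is indeed permitted by the definition of a $C$-avoiding path of length $1$, since $v \notin V(C)\cup X_C$ and so one endpoint lies outside $V(C)\cup X_C$; the point is to rule it out structurally.) I first record that $v \notin V(C)$: since $C$ is a hole of $G$ and $v$ lies on no hole of $G$ by hypothesis, $v$ cannot be a vertex of $C$. Thus $v$ is a vertex off $C$ that is adjacent to the three vertices $v_i$, $v_{i+1}$, and $w$ of $C$.

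The key combinatorial observation is that $v$ must then be adjacent to two \emph{non-adjacent} vertices of $C$. Writing $C = v_1 v_2 \cdots v_n v_1$ with $n \geq 4$, the vertex $w$ equals $v_j$ for some $j \notin \{i, i+1\}$ (indices mod $n$). Since $C$ is an induced cycle, $w$ is adjacent on $C$ to $v_i$ only if $j \equiv i-1$ and to $v_{i+1}$ only if $j \equiv i+2 \pmod n$. Because $n \geq 4$, we cannot have both $j \equiv i-1$ and $j \equiv i+2 \pmod n$ simultaneously, so $w$ is non-adjacent in $G$ to at least one of $v_i, v_{i+1}$. Hence $v$ is adjacent to a non-adjacent pair drawn from $\{v_i, w\}$ or from $\{v_{i+1}, w\}$.

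Now apply Lemma~\ref{wheel1} with this non-adjacent pair in the role of $x,y$ (legitimate, as $v \notin V(C)$). It yields that exactly one of two situations holds: either $v$ is adjacent to every vertex of $C$, or $v$ lies on a hole $C^* \neq C$. The first alternative forces $v \in X_C$, contradicting the hypothesis $v \notin X_C$; the second contradicts the hypothesis that $v$ lies on no hole of $G$. Both are impossible, so $P$ cannot have length $1$, and therefore $P$ has length at least $2$.

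The argument is short because all the structural content is carried by Lemma~\ref{wheel1}; the only genuine step is to exhibit a non-adjacent pair of neighbors of $v$ on $C$ so that the lemma becomes applicable. The one point that needs care is the index arithmetic showing that $w$ fails to be adjacent to both $v_i$ and $v_{i+1}$, which relies essentially on $|V(C)| \geq 4$ (for a triangle the conclusion would fail, but a hole has length at least $4$ by definition). I do not expect any obstacle beyond this bookkeeping.
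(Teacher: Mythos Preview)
Your proof is correct and follows essentially the same route as the paper's: assume $|E(P)|=1$, observe that $\{v_i,v_{i+1},w\}$ cannot induce a triangle (your index arithmetic spells out what the paper states in one clause), and then invoke Lemma~\ref{wheel1} to contradict both $v\notin X_C$ and the assumption that $v$ lies on no hole. The only addition you make is the explicit remark that $v\notin V(C)$, which the paper leaves implicit.
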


\begin{proof}
Let $P$ be a $C$-avoiding path from $v$ to a vertex $w$ in
$V(C) \setminus \{v_i,v_{i+1} \}$.
If $|E(P)|=1$, then $v$ is adjacent to two non-adjacent vertices of $C$
since $\{v_i, v_{i+1}, w \}$ does not induce a triangle.
Then $v$ satisfies the hypothesis of Lemma~\ref{wheel1}
while it does not satisfy none of (1) and (2) in Lemma~\ref{wheel1},
which is a contradiction. Thus, $|E(P)| \geq 2$.
\end{proof}


\section{Proof of Theorem \ref{compthree}}

In this section, we shall show that the competition number of a
graph with exactly two holes cannot exceed $3$.

Let $G$ be a graph with exactly two holes $C_1$ and $C_2$.
We denote the holes of $G$ by
\[
C_1: v_{0} v_{1} \cdots v_{m-1} v_{0}, \quad C_2: w_{0} w_{1}
\cdots w_{m'-1} w_{0},
\]
where $m$ and $m'$ are the lengths of the holes $C_1$ and $C_2$,
respectively. In the following, we assume that all subscripts of
vertices on a cycle are considered modulo the length of the cycle.
Without loss of generality, we may assume that $m \geq m' \ge 4$.
For $t \in \{1,2\}$, let
\[
X_t := X_{C_t} = \{ x \in V(G) \mid xv \in E(G)
\text{ for all } v \in V(C_t) \}.
\]

In the following, we deal with the case that the two holes have a
common edge since Theorem~\ref{edge-dis} covers the case that the
two holes are edge disjoint.

\begin{Lem}\label{clique1}
If a graph $G$ has exactly two holes $C_1$ and $C_2$,
then both $X_1$ and $X_2$ are cliques.
\end{Lem}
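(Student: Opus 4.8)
The plan is to show that $X_t$ (for $t\in\{1,2\}$) cannot contain two non-adjacent vertices, by deriving a contradiction with the hypothesis that $G$ has \emph{exactly} two holes. Fix $t$ and suppose $x,y\in X_t$ are non-adjacent. Since $C_t$ has length $m'\ge 4$, pick two non-consecutive vertices $v_i,v_j$ of $C_t$. Both $x$ and $y$ are adjacent to every vertex of $C_t$, so in particular $x$ is adjacent to $v_i$ and $v_j$ and $y$ is adjacent to $v_i$ and $v_j$. Then the 4-cycle $x\,v_i\,y\,v_j\,x$ has no chord: $xy\notin E(G)$ by assumption, and $v_iv_j\notin E(G)$ because $v_i,v_j$ are non-consecutive on the induced cycle $C_t$. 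Hence this 4-cycle is an induced $C_4$, i.e. a hole of $G$. This hole is distinct from both $C_1$ and $C_2$: it is not $C_t$ since $x\notin V(C_t)$ (as $V(C_t)\cap X_t=\emptyset$), and it is not the other hole $C_s$ ($s\ne t$) either, as I explain next. So $G$ has at least three holes, contradicting the hypothesis.

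The one point needing care is why the newly found hole is not the \emph{other} hole $C_s$. Here I distinguish two cases depending on how many vertices of $C_t$ lie in $V(C_s)$. The key observation is that $x$ and $y$ are adjacent to \emph{all} of $V(C_t)$, so $x,y\in X_{C_t}$, and I can exploit that $C_t$ has at least four vertices while $x,y$ together with two vertices of $C_t$ form only a 4-cycle. Concretely: if the 4-cycle $x v_i y v_j$ equalled $C_s$, then $V(C_s)=\{x,y,v_i,v_j\}$, so $C_s$ contains at most two vertices of $C_t$, namely $v_i$ and $v_j$; but then $x$, being in $V(C_s)$, would be adjacent along $C_s$ only to $v_i$ and $v_j$, which is consistent — so I instead note that $x\in X_{C_t}$ means $x$ is adjacent to a third vertex $v_k$ of $C_t$ ($k\ne i,j$, possible since $m'\ge 4$), and $v_k\notin V(C_s)=\{x,y,v_i,v_j\}$, yet $v_k$ is adjacent to the vertex $x$ of $C_s$, which makes $v_k\in X_{C_s}$ impossible to... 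Let me streamline: since $C_s$ is an induced cycle, $x\in V(C_s)$ has exactly two neighbors on $C_s$; but $x$ is adjacent to all $\ge 4$ vertices of $C_t$, and at most two of those can be the two $C_s$-neighbors of $x$, so some $v_k\in V(C_t)$ is adjacent to $x$ but is not on $C_s$ — then $v_k\notin\{x,y,v_i,v_j\}$, contradicting $V(C_s)=\{x,y,v_i,v_j\}$. This settles it.

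I expect the main obstacle to be exactly this bookkeeping — making sure the constructed $C_4$ is genuinely a \emph{third} distinct hole rather than accidentally coinciding with $C_1$ or $C_2$ — and the argument above resolves it cleanly using $V(C_t)\cap X_t=\emptyset$ together with $m'\ge 4$. Everything else (chordlessness of the 4-cycle) is immediate from the definitions of $X_t$ and of a hole. Since the argument is symmetric in $t\in\{1,2\}$, it gives that both $X_1$ and $X_2$ are cliques.
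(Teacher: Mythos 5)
Your construction of the chordless $4$-cycle $x\,v_i\,y\,v_j\,x$ is fine, as is the observation that it differs from $C_t$ because $x\notin V(C_t)$. The gap is in the step where you rule out that this cycle equals the \emph{other} hole $C_s$: assuming $V(C_s)=\{x,y,v_i,v_j\}$, you produce a vertex $v_k\in V(C_t)$ that is adjacent to $x$ but not on $C_s$, and declare this to contradict $V(C_s)=\{x,y,v_i,v_j\}$. It does not: a vertex of a hole is perfectly free to have neighbours outside the hole (an induced cycle only forbids chords, i.e.\ edges between non-consecutive vertices \emph{of that cycle}); your own earlier, abandoned attempt even concedes the configuration ``is consistent.'' So as written you have not excluded the possibility that your single $4$-cycle is $C_s$ itself, in which case you have exhibited only two holes and obtained no contradiction. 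Indeed, no purely local argument about that one cycle can exclude it — the contradiction has to come from exhibiting a further hole.

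That is exactly how the paper's proof sidesteps the issue: from non-adjacent $x_1,x_2\in X_1$ it forms \emph{two} $4$-cycles, $x_1v_0x_2v_2x_1$ and $x_1v_1x_2v_3x_1$. Both are chordless (using $x_1x_2\notin E(G)$ and the fact that $v_0v_2,v_1v_3$ are non-edges of the hole $C_1$, which has length $\ge 4$), both differ from $C_1$ since $x_1\notin V(C_1)$, and they differ from each other since their vertex sets differ. Hence $G$ has at least three holes, a contradiction — and since at most one of the two new cycles could coincide with $C_2$, no separate argument distinguishing them from $C_2$ is ever needed. To repair your proof, replace the faulty ``not equal to $C_s$'' step by this second-cycle argument (e.g.\ if your cycle on $\{x,y,v_i,v_j\}$ happens to be $C_s$, take another non-consecutive pair such as $v_{i+1},v_{j+1}$ and obtain a third hole); the rest of your argument then goes through.
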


\begin{proof}
Suppose that two distinct vertices $x_1$ and $x_2$ in $X_1$
are not adjacent. Then $x_1 v_0 x_2 v_2 x_1$ and $x_1 v_1 x_2 v_3 x_1$ are
two holes other than $C_1$. That is, $G$ has at least three holes,
which is a contradiction.
\end{proof}

\begin{Lem}\label{path}
Let $G$ be a graph having exactly two holes $C_1$ and $C_2$.
If $C_1$ and $C_2$ have a common edge,
then the subgraph of $G$ induced by $E(C_1) \cap E(C_2)$ is a path.
\end{Lem}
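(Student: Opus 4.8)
The plan is to argue by contradiction, exhibiting a hole other than $C_1$ and $C_2$. Put $F := E(C_1)\cap E(C_2)$, which is non-empty by hypothesis. Since $F\subseteq E(C_1)$, the subgraph $\langle F\rangle$ of $G$ with edge set $F$ is a subgraph of the cycle $C_1$ (and of $C_2$), hence a vertex-disjoint union of paths --- the maximal ``arcs'' of $C_1$ made up of edges of $F$ --- with no isolated vertex. So $\langle F\rangle$ is a path exactly when it consists of a single such arc that is not all of $C_1$; and $F=E(C_1)$ is impossible, for then every vertex of $C_1$ would have both of its $C_1$-edges in $C_2$, hence equal to its two $C_2$-edges, so $V(C_1)$ would be a union of components of $C_2$ and $C_1=C_2$, against the holes being distinct. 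Hence it suffices to assume that $\langle F\rangle$ has at least two arcs and derive a contradiction.

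Throughout I use two standing facts. First, since $C_1$ and $C_2$ are \emph{induced} cycles, any edge of $G$ with both ends on $C_1$ (resp.\ $C_2$) is an edge of $C_1$ (resp.\ $C_2$); in particular two vertices that are non-consecutive on $C_1$ (resp.\ $C_2$) are non-adjacent in $G$. Second, as a consequence, no cycle $W$ with $E(W)\subseteq E(C_1)\cup E(C_2)$ that uses an edge of $E(C_1)\setminus E(C_2)$ and an edge of $E(C_2)\setminus E(C_1)$ can be a triangle: its third edge would join two non-consecutive vertices of one of the two holes.

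Now choose, among all cycles $W$ of $G$ with $E(W)\subseteq E(C_1)\cup E(C_2)$ and $W\notin\{C_1,C_2\}$, one with the fewest edges; such $W$ exist because $C_1\triangle C_2$ is non-empty (as $C_1\neq C_2$) and even, so it contains a cycle $Z$, and $E(Z)\cap F=\emptyset$ together with $F\neq\emptyset$ gives $Z\neq C_1,C_2$ while $E(Z)\subseteq E(C_1)\cup E(C_2)$. For this $W$, $E(W)\subseteq E(C_1)$ would force $W=C_1$, so $W$ uses an edge of $E(C_2)\setminus E(C_1)$ and, symmetrically, of $E(C_1)\setminus E(C_2)$; then $|E(W)|\geq 4$ by the second fact. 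If $W$ is chordless it is a hole other than $C_1,C_2$, and we are done. Otherwise let $e=xy$ be a chord, splitting $W$ into strictly shorter cycles $W_1,W_2$ with $E(W_1)\cap E(W_2)=\{e\}$ and $E(W_1)\cup E(W_2)=E(W)\cup\{e\}$. If $e\in E(C_1)\cup E(C_2)$ then $E(W_1),E(W_2)\subseteq E(C_1)\cup E(C_2)$, so by minimality each of $W_1,W_2$ is $C_1$ or $C_2$; as $W_1\neq W_2$ we get $\{W_1,W_2\}=\{C_1,C_2\}$, whence $F=E(C_1)\cap E(C_2)=E(W_1)\cap E(W_2)=\{e\}$, contradicting that $\langle F\rangle$ has at least two arcs. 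So every chord of $W$ lies outside $E(C_1)\cup E(C_2)$.

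It remains to handle such a chord $e=xy$. Since $x,y\in V(C_1)\cup V(C_2)$ and $e$ would be a cycle-edge if both ends lay on $C_1$, or both on $C_2$, we may assume $x\in V(C_1)\setminus V(C_2)$ and $y\in V(C_2)\setminus V(C_1)$. Choosing $e$ so that the shorter $x$--$y$ arc $W[x,y]$ of $W$ is as short as possible, one checks that $W_1:=W[x,y]+e$ is chordless (a chord of $W_1$ inside $E(C_1)\cup E(C_2)$ would be a chord of $W$ of the type already excluded, and one outside would be a chord of $W$ with a strictly shorter arc), and $W_1\notin\{C_1,C_2\}$ since it contains $e$; hence if $|E(W_1)|\geq 4$ then $W_1$ is a third hole, a contradiction. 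So $W[x,y]=x\,t\,y$, with $xt\in E(C_1)\setminus E(C_2)$ and $ty\in E(C_2)\setminus E(C_1)$ (because $x\notin V(C_2)$, $y\notin V(C_1)$), so $t\in V(C_1)\cap V(C_2)$, a $C_1$-neighbour of $x$ and a $C_2$-neighbour of $y$. From here one is led (after a further shortest-arc argument on the complementary side) to the case that $W$ is a $4$-cycle $x\,t_1\,y\,t_2\,x$ with diagonal $e$, where $t_1,t_2$ are the two $C_1$-neighbours of $x$ and the two $C_2$-neighbours of $y$; thus $t_1,t_2$ are non-consecutive on $C_1$ and on $C_2$, hence non-adjacent in $G$. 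Then Lemma~\ref{wheel1} applies to $x$ (off $C_2$, adjacent to the non-adjacent vertices $t_1,t_2$ of $C_2$) and, symmetrically, to $y$: in each alternative one gets a configuration incompatible with $G$ having exactly the holes $C_1,C_2$ --- either a vertex dominating one hole while lying on the other, which allows splicing a $t_1$--$t_2$ path of $C_1-x$ (its interior off $C_2$, its edges in $E(C_1)\setminus E(C_2)$ by the standing fact) with a $t_1$--$t_2$ arc of $C_2$ into a fresh chordless cycle of length $\geq 4$ in $E(C_1)\cup E(C_2)$ other than $C_1,C_2$; or a common overlap confined to a single $(t_1,t_2)$-section of each hole, which is eliminated directly. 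Pinning down this last paragraph --- reducing cleanly to the $4$-cycle-with-diagonal, and then discharging the two alternatives of Lemma~\ref{wheel1} together with the accompanying chordlessness checks --- is the part I expect to be the real obstacle; everything before it is bookkeeping with the two standing facts.
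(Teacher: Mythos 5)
Your argument is clean and correct up to the point where every chord of the minimal cycle $W$ lies outside $E(C_1)\cup E(C_2)$ and some chord $e=xy$ has a length-two arc $x\,t\,y$ with $t\in V(C_1)\cap V(C_2)$: the existence of $W$ via the symmetric difference, the exclusion of chords inside $E(C_1)\cup E(C_2)$ by minimality, and the chordlessness of $W[x,y]+e$ under the shortest-arc choice are all sound. But the endgame is a genuine gap, and it is where the whole content of the lemma sits. First, the ``further shortest-arc argument on the complementary side'' does not follow from the minimality you set up: a chord of $W_2=W[y,x]+e$ has both ends on the long arc, and its shorter arc in $W$ has length at least $2$, which does not beat the length-$2$ arc of $e$; so you cannot conclude that $W_2$ is a triangle, i.e.\ that $W$ is a $4$-cycle $x\,t_1\,y\,t_2\,x$, without a new argument (some induction or a different extremal choice). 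Second, even granting the $4$-cycle configuration, discharging alternative (1) of Lemma~\ref{wheel1} is not the routine splice you describe: if $x\in X_2$, the $t_1$--$t_2$ path $C_1-x$ need not have its interior off $C_2$ (the two holes share at least two edges spread over at least two arcs, so $C_1$ meets $C_2$ in further vertices), the spliced closed walk need not be a cycle, and it can have chords joining vertices of $C_1$ to vertices of $C_2$ that lie on neither hole. Eliminating these chords requires exactly the kind of shortest-path-inside-a-walk surgery that constitutes the hard part of the proof; your own closing sentence concedes this step is not done.

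For comparison, the paper avoids the minimal-cycle/chord bookkeeping altogether: assuming the intersection is not a path, it takes the first vertex $v_i$ of $C_1$ after the break that is again common to $C_2$, forms the walk consisting of the $(w,v_i)$-section of $C_2$ followed by $v_{i+1}\cdots v_{m-1}$, chooses a shortest $(w,v_{m-1})$-path $P$ through the vertices of that walk, and shows $v_0v_1Pv_0$ is a hole, which is then forced to be $C_2$; unpacking what that forces about adjacencies shows the common edges form a path after all, a contradiction. That single shortest-path construction is precisely the device your sketch is missing for both the reduction to the $4$-cycle and the treatment of the Lemma~\ref{wheel1} alternatives, so as it stands the proposal does not yet prove the lemma.
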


\begin{proof}
Suppose that $G[E(C_1) \cap E(C_2)]$ is not a path.
Without loss of generality,
we may assume that $v_0v_1$ is a common edge but
$v_1v_2$ is not common.
Let $v_i$ be the first vertex on
$C_1$ after $v_1$ common to $C_1$ and $C_2$.
Then $i \in \{2, \ldots, m-2\}$.
Let $w$ be the vertex on $C_2$
that is adjacent to $v_1$
and that is not $v_0$.
Let $Z$ be the $(w,v_i)$-section of $C_2$
which does not contain $v_0$.
Now, consider the $(w,v_{m-1})$-walk
$W:=Z v_{i+1} \cdots v_{m-1}$.
Let $P$ be a shortest $(w,v_{m-1})$-path
among $(w,v_{m-1})$-paths 
such that $V(P) \subseteq V(W)$.
We shall claim that $C:=v_0v_1Pv_0$ is a hole.
Since neither $v_0$ nor $v_1$ is on $W$, none of $v_0$,
$v_1$ is on $P$.
Thus $C$ is a cycle.
By the definition of $P$,
there is no chord between any pair of non-consecutive vertices on $P$.
Since $C_1$ is a hole, $v_0$ is not adjacent to any of
$v_{i+1}$, \ldots, $v_{m-2}$.
Since $\{v_0\} \cup V(Z) \subset V(C_2)$,
$v_0$ is not adjacent to any vertex on $Z$.
Thus $v_0$ is not adjacent to any vertex on $P$.
By a similar argument,
we can show that $v_1$ is not adjacent
to any vertex in $V(P) \setminus \{w\}$.
Hence $C$ is a hole of $G$.
Since $v_1v_2 \not\in E(C)$,
we have $C \neq C_1$
and so $C=C_2$.

If $v_j$ is adjacent to a vertex $v$ on $Z$
for some $j \in \{i+1, \ldots, m-1 \}$,
then $v_j v$ is shorter than any
$(v,v_j)$-path containing $v_i$ in $G[W]$
and so $P$ does not contain $v_i$.
Therefore $v_i \not\in V(C)$,
and so $C \neq C_2$, which is a contradiction.
Thus, $v_j$ is not
adjacent to any vertex on $Z$ for any
$j \in \{i+1, \ldots, m-1\}$.
Hence $v_j$ is not on $Z$ for any $j \in \{i+1, \ldots, m-1\}$.
This implies that no vertex on $W$
repeats and that no two non-consecutive vertices in $W$ are adjacent.
Thus $W=P$.
Then $G[E(C_1) \cap E(C_2)]=v_iv_{i+1} \cdots v_{m-1}v_0v_1$
is a path and we reach a contradiction.
\end{proof}

\begin{Lem}\label{pathx}
Let $G$ be a graph having exactly two holes $C_1$ and $C_2$.
If $|E(C_1) \cap E(C_2)| \geq 2$, then $X_1 = X_2$.
\end{Lem}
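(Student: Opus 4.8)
The plan is to prove the inclusion $X_1 \subseteq X_2$; since the hypothesis $|E(C_1)\cap E(C_2)|\ge 2$ is symmetric in $C_1$ and $C_2$, the same argument with the roles of $C_1$ and $C_2$ interchanged will give $X_2 \subseteq X_1$, and hence equality.

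First I would pin down the shared part. By Lemma~\ref{path}, $G[E(C_1)\cap E(C_2)]$ is a path, and since it has at least two edges it can be written as $a_0 a_1 a_2 \cdots a_k$ with $k \ge 2$; its vertices $a_0,\dots,a_k$ occur consecutively both along $C_1$ and along $C_2$. Now take an arbitrary $x \in X_1$, so that $x$ is adjacent to every vertex of $C_1$, in particular to $a_0, a_1, \dots, a_k$.

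The next step is to check that $x$ is not a vertex of $C_2$. If it were, then since $C_2$ is a hole (an induced cycle) the only neighbours of $x$ inside $V(C_2)$ would be its two neighbours along $C_2$; but $x$ is adjacent to the $k+1 \ge 3$ distinct vertices $a_0,\dots,a_k$, all of which lie on $C_2$, so one of them must equal $x$. That vertex lies on $C_1$, contradicting $x \in X_1$ together with the fact that $V(C_1) \cap X_1 = \emptyset$. Hence $x \notin V(C_2)$. Moreover $a_0$ and $a_2$ are non-adjacent in $G$: they lie on the hole $C_2$ at distance exactly $2$ (one arc of $C_2$ between them is the subpath $a_0 a_1 a_2$, and the other arc has length $m' - 2 \ge 2$), so no chord joins them.

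Finally I would invoke Lemma~\ref{wheel1} with the hole $C_2$ and the vertex $x$, which is not on $C_2$ and is adjacent to the two non-adjacent vertices $a_0$ and $a_2$ of $C_2$. Conclusion (1) of that lemma says $x$ is adjacent to all vertices of $C_2$, i.e.\ $x \in X_2$, which is exactly what we want. Conclusion (2) says $x$ lies on a hole $C^* \ne C_2$; but $G$ has exactly two holes, so $C^* = C_1$ and $x \in V(C_1)$, again contradicting $x \in X_1$. Thus $x \in X_2$ in all cases, proving $X_1 \subseteq X_2$. The argument is short; the only point that needs a little care is verifying the two hypotheses of Lemma~\ref{wheel1} — that $x \notin V(C_2)$ and that $a_0, a_2$ are genuinely non-adjacent — and the crucial use of the hypothesis ``exactly two holes'' is precisely that it forces the alternative hole $C^*$ to be $C_1$ and thereby yields a contradiction.
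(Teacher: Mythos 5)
Your proof is correct and follows essentially the same route as the paper: use Lemma~\ref{path} to get the common path, show $x\in X_1$ cannot lie on $C_2$, then apply Lemma~\ref{wheel1} to a pair of non-adjacent vertices of the shared path and rule out alternative (2) because the other hole would have to be $C_1$, contradicting $V(C_1)\cap X_1=\emptyset$, with symmetry giving the reverse inclusion. One tiny wording point: in the step excluding $x\in V(C_2)$, since adjacency already forces $x\neq a_i$ for all $i$, the clean conclusion is simply that $x$ would have at least three neighbours on the induced cycle $C_2$, which is impossible, rather than ``one of them must equal $x$''.
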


\begin{proof}
By Lemma \ref{path},
we have $G[E(C_1) \cap E(C_2)]= w_i w_{i+1} \cdots
w_j$ where $|j-i| \geq 2$.
We take any vertex $x \in X_1$.
If $x \in V(C_2)$, then $C_2$ has a chord $xw_{i+1}$,
which is a contradiction.
Therefore $x \not\in V(C_2)$.
Then $x$ must be contained in $X_2$
by the Lemma~\ref{wheel1} since $x$ is adjacent
to non-adjacent vertices $w_i$ and $w_j$ in $V(C_2)$.
Thus, $X_1 \subseteq X_2$.
Similarly, it can be shown that $X_2 \subseteq X_1$.
\end{proof}

\begin{Lem}\label{noavoid2}
Let $G$ be a graph having exactly two holes $C_1$ and $C_2$.
If there is no $C_t$-avoiding $(u,v)$-path for consecutive
vertices $u$, $v$ on $C_t$ for $t \in \{1,2\}$, then $G-uv$ has
at most one hole.
\end{Lem}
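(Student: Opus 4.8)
The plan is to show that deleting $uv$ creates no new hole, so that every hole of $G-uv$ is already a hole of $G$; since $uv$ lies on $C_t$, the hole $C_t$ is destroyed and only $C_{3-t}$ can survive, leaving at most one hole. Throughout let $t$ be the index with $uv\in E(C_t)$.

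Suppose, for contradiction, that $C'$ is a hole of $G-uv$ that is not a hole of $G$. Since $G$ and $G-uv$ differ only in the edge $uv$ while $C'$ is induced in $G-uv$, the only chord of $C'$ in $G$ is $uv$; in particular $u$ and $v$ lie on $C'$ but are not consecutive there, so $C'=P\cup Q$ where $P,Q$ are internally disjoint $(u,v)$-paths with $|E(P)|,|E(Q)|\ge 2$, $V(P)\cap V(Q)=\{u,v\}$ and $|E(P)|+|E(Q)|\ge 4$. The crucial observation is that for such a path $P$ the cycle $\widetilde{P}:=uvP^{-1}$ has vertex set $V(P)$ and, because $C'$ is induced in $G-uv$, one checks that $E(G[V(P)])=E(P)\cup\{uv\}=E(\widetilde{P})$; thus $\widetilde{P}$ is a chordless cycle of $G$ --- a hole when $|E(P)|\ge 3$, and the triangle $uvp$ when $P=upv$. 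The same applies to $Q$.

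Next I would run a short case analysis, producing in each case a $C_t$-avoiding $(u,v)$-path and contradicting the hypothesis. If $|E(P)|\ge 3$ and $|E(Q)|\ge 3$, then $\widetilde{P}$ and $\widetilde{Q}$ are holes of $G$ with $V(\widetilde{P})\cap V(\widetilde{Q})=\{u,v\}$, hence distinct, so $\{\widetilde{P},\widetilde{Q}\}=\{C_1,C_2\}$; since $uv$ lies on $C_t$ we may take $C_t=\widetilde{P}$ and $C_{3-t}=\widetilde{Q}$, so $V(C_t)\cap V(C_{3-t})=V(P)\cap V(Q)=\{u,v\}$. The internal vertices of $Q$ are exactly $V(C_{3-t})\setminus\{u,v\}$; these lie outside $V(C_t)$, and outside $X_t$ as well (a vertex of $X_t$ on $C_{3-t}$ would be adjacent to both $u$ and $v$, forcing $uv$ and those two edges into $C_{3-t}$ and making it a triangle), so $Q$ is $C_t$-avoiding. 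Otherwise, say $|E(P)|=2$ with $P=upv$. Then $p\notin V(C_t)$ (else $uv,up,vp\in E(C_t)$ would make $C_t$ a triangle), so if $p\notin X_t$ the path $P$ is already $C_t$-avoiding. If $p\in X_t$, then $p$ is adjacent to every vertex of $C_t$ but, lying on the induced cycle $C'$, to no vertex of $V(C')\setminus\{u,v\}$. When $|E(Q)|=2$, write $Q=uqv$; the vertex $q$ opposite $p$ on the $4$-cycle $C'$ satisfies $q\notin V(C_t)$ and $pq\notin E(G)$, so $q\notin X_t$ by Lemma~\ref{clique1}, and $Q$ is $C_t$-avoiding. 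When $|E(Q)|\ge 3$, the hole $\widetilde{Q}$ has internal vertices, none adjacent to $p$ and hence none in $V(C_t)$; so $V(\widetilde{Q})\ne V(C_t)$, forcing $\widetilde{Q}=C_{3-t}$ and $V(C_t)\cap V(C_{3-t})=\{u,v\}$, and as before the internal vertices of $Q$ avoid $V(C_t)\cup X_t$, so $Q$ is $C_t$-avoiding. In every case the hypothesis is contradicted, so no such $C'$ exists, and $G-uv$ has at most one hole.

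The main obstacle I anticipate is packaging this case analysis cleanly. The decisive point --- that an arc of a hypothetical new hole closes up with $uv$ into a chordless cycle of $G$ --- is what makes the ``exactly two holes'' hypothesis bite; the rest is bookkeeping of when a vertex of $X_t$ can sit on the relevant configuration, which is controlled by Lemma~\ref{clique1}.
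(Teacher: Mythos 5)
Your proof is correct. Its core machinery is the same as the paper's: take a putative hole $C'$ of $G-uv$ that is not a hole of $G$, observe that $uv$ is its unique chord in $G$, split $C'$ into its two $(u,v)$-sections, and exploit that a section of length $2$ forces its internal vertex into $X_t$ (no $C_t$-avoiding path) while a section of length at least $3$ closes up with $uv$ into a chordless cycle of $G$, hence one of the two holes; Lemma~\ref{clique1} kills the two-triangles configuration in both proofs. Where you genuinely diverge is in the packaging and the strength of what you establish. The paper splits into the cases $uv\notin E(C_1)\cap E(C_2)$ and $uv\in E(C_1)\cap E(C_2)$; in the shared-edge case it first derives $X_1=X_2$ (using the absence of both $C_1$- and $C_2$-avoiding paths) and it does \emph{not} exclude the ``composite'' cycle $(C_1-uv)\cup(C_2-uv)$, settling for the conclusion that this is the only hole that could survive. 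You instead prove, uniformly and without the shared/non-shared case split, that under the hypothesis no new hole arises at all: in the configuration where both sections have length at least $3$ you note that the section complementary to $C_t$ is itself a $C_t$-avoiding $(u,v)$-path, since its internal vertices miss $V(C_t)$ by disjointness and miss $X_t$ because a vertex of a chordless cycle through the edge $uv$ cannot be adjacent to both $u$ and $v$. Two consequences of your route are worth noting: you never need $X_1=X_2$, and you use the hypothesis only for a single index $t$ with $uv\in E(C_t)$, which matches how Lemma~\ref{noavoid2} is actually invoked later (in the proof of Theorem~\ref{compthree} only the non-existence of a $C_1$-avoiding path is known), whereas the paper's shared-edge case formally consumes the hypothesis for both holes. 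The only cosmetic slip is the phrase ``let $t$ be \emph{the} index with $uv\in E(C_t)$'': in the shared-edge case this index is not unique, but any admissible choice works in your argument, so nothing breaks.
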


\begin{proof}
First, we consider the case where $uv \not\in E(C_1) \cap E(C_2)$. 
We may assume that $uv$ is an edge of $C_1$. Suppose that $G-uv$
has at least two holes. Let $C^*$ be a hole of $G-uv$ different
from $C_2$. Then $C^*+uv$ contains two cycles $C_1$ and $C'$
sharing exactly one edge $uv$. Note that $C' \neq C_2$ since $uv$
does not belong to $C_2$. If $|E(C')| \geq 4$, then $C'$ is a
hole, which is a contradiction. Thus it follows that $C'-uv$ is a
path of length $2$. Let $x$ be the internal vertex of $C'-uv$.
Since there is no $C_1$-avoiding $(u,v)$-path, it holds that $x
\in X_1$. However, this implies that $C^*$ has a chord joining $x$
and every vertex in $V(C_1) \setminus \{u,v\}$, which is a
contradiction.

Second, we consider the case where $uv \in E(C_1) \cap E(C_2)$.
Then $G-uv$ contains neither $C_1$ nor $C_2$. 
If there exists a vertex $x \in X_1\setminus X_2$ 
(resp.\ $x \in X_2 \setminus X_1$), 
$uxv$ is a $C_2$-avoiding (resp. $C_1$-avoiding) path, 
which is a contradiction. 
Thus we can let $X=X_1=X_2$. 
Suppose that $G-uv$ contains a hole $C^*$. 
Since $C^*$ is not a hole of $G$, 
$uv$ is a chord of $C^*$ in $G$. 
In fact, $uv$ is the unique chord of $C^*$ in $G$. 
Let $Z^*_1$ and $Z^*_2$ be the two $(u,v)$-sections of $C^*$. 
If $|E(Z^*_1)|=|E(Z^*_2)|=2$, then the 
internal vertices $x_1$ and $x_2$ of the $(u,v)$-paths $Z^*_1$ and
$Z^*_2$, respectively, 
are contained in $X$ since there is no hole-avoiding $(u,v)$-path in $G$. 
So $x_1$ and $x_2$ are adjacent
by Lemma~\ref{clique1}, which contradicts the assumption that
$C^*$ is a hole of $G-uv$. If $|E(Z^*_i)|=2$ and $|E(Z^*_j)| \geq
3$ where $\{i,j\}=\{1,2\}$, then the internal vertex $x_i$ of
$Z^*_i$ is in $X$ and $Z^*_j$ is one of $C_1-uv$ and $C_2-uv$
since $Z^*_j+uv$ is a hole of $G$. This implies that the vertex
$x_i$ is adjacent to all the internal vertices of $Z^*_j$, which
also contradicts the assumption that $C^*$ is a hole of $G-uv$.
Hence, $|E(Z^*_1)| \geq 3$ and $|E(Z^*_2)| \geq 3$. This implies
that $C^*$ is composed of $C_1-uv$ and $C_2-uv$ and so $G-uv$ has
at most one hole.
\end{proof}

\begin{Lem}\label{avoid2}
Let $G$ be a graph with exactly two holes $C_1$ and $C_2$ sharing
at least one edge.
Suppose that
there exists a $C_1$-avoiding $(v_i,v_{i+1})$-path
for each $i \in \{ 0, 1, \ldots, m-1 \}$.
Then $G$ has a subgraph $G_1$ which has exactly one hole
and an induced subgraph $G_2$ which is chordal
such that $E(G_1) \cup E(G_2) =E(G)$
and $V(G_1) \cap V(G_2)=X_1 \cup \{ v_j, v_{j+1} \}$
for some $j \in \{ 0, 1, \ldots, m-1 \}$.
\end{Lem}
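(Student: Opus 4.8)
\noindent\emph{Proof idea.}
My plan is to split $G$ along the clique $X:=X_1\cup\{v_j,v_{j+1}\}$ for a carefully chosen edge $v_jv_{j+1}$ of $C_1$: take $G_2$ to be the induced subgraph of $G$ on one side of this cut and $G_1$ the induced subgraph on the other side with the single edge $v_jv_{j+1}$ deleted. Then $X$ is a clique of $G_2$ by Lemma~\ref{clique1} (every vertex of $X_1$ is adjacent to $v_j$ and $v_{j+1}$, and $v_jv_{j+1}\in E(G)$), and, since $X$ will be a vertex cut separating the two sides, the only edge common to the two induced parts is $v_jv_{j+1}$, which still lies in $G_2$; hence $E(G_1)\cup E(G_2)=E(G)$ automatically. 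So everything reduces to choosing $j$ so that $G_2$ is chordal and $G_1$ has exactly one hole.

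I would begin by fixing the structure of the two holes: by Lemma~\ref{path}, $C_1$ and $C_2$ share a path $P_0$, and I write $C_1$ as the union of $P_0$ with a complementary path $Q_1$ joining the two branch vertices, so $E(Q_1)=E(C_1)\setminus E(C_2)$; using Lemma~\ref{wheel1} as in the proof of Lemma~\ref{pathx}, together with the ``exactly two holes'' assumption, I check that no vertex of $X_1$ lies on $C_2$. The cut edge $v_jv_{j+1}$ is always chosen on $Q_1$ and not incident with a branch vertex, so that $v_j,v_{j+1}\notin V(C_2)$ and $X\cap V(C_2)=\emptyset$. I then let $K$ be the component of $G-X$ containing $V(C_1)\setminus\{v_j,v_{j+1}\}$; because $P_0$ survives in $K$ and $V(C_2)\cap X=\emptyset$, this $K$ also contains all of $C_2$. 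Setting $V(G_1):=V(K)\cup X$ and $V(G_2):=V(G)\setminus V(K)$, chordality of $G_2$ is essentially free: $G_2$ is an induced subgraph of $G$ meeting $V(C_1)$ only in $\{v_j,v_{j+1}\}$ and disjoint from $V(C_2)$, so it contains no hole of $G$, and an induced subgraph of $G$ has no holes other than holes of $G$.

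The real content is that $G_1$ has exactly one hole. That $C_2$ is a hole of $G_1$ is clear, since $C_2$ lies in $G[V(K)\cup X]$, is chordless in $G$, and loses no chord when $v_jv_{j+1}\notin E(C_2)$ is deleted. For uniqueness, let $C^*$ be any hole of $G_1$. As $G_1$ is the induced subgraph $G[V(K)\cup X]$ with one edge removed, either $C^*$ is already chordless in $G[V(K)\cup X]$, hence a hole of $G$, hence $C_1$ or $C_2$, and necessarily $C_2$ because $v_jv_{j+1}\in E(C_1)$ is absent from $G_1$; or $v_j,v_{j+1}\in V(C^*)$ with $v_jv_{j+1}$ the unique chord of $C^*$ in $G[V(K)\cup X]$. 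In the second case, reinstating the chord splits $C^*$ into two chordless cycles of $G$ sharing exactly the edge $v_jv_{j+1}$; any of them of length $\ge 4$ is a hole of $G$ through $v_jv_{j+1}\in E(C_1)\setminus E(C_2)$, hence equal to $C_1$, so one of them is $C_1$ and the other is a triangle $v_jzv_{j+1}$ — whence $C^*$ is the cycle obtained from $C_1$ by replacing $v_jv_{j+1}$ with $v_jzv_{j+1}$ — unless both are triangles, in which case $C^*$ is a $4$-cycle $v_jz_1v_{j+1}z_2v_j$. Thus, to conclude I must choose $j$ so that the cut edge $v_jv_{j+1}$ has no such ``short bypass'' vertex $z$, and no such pair $z_1,z_2$, on the $K$-side.

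The main obstacle is exactly this last point: exhibiting an edge of $Q_1$, disjoint from the branch vertices, that is not shortcut in either way by a $K$-side vertex. Here I expect to use the $C_1$-avoiding-path hypothesis in the contrapositive: if every candidate edge of $Q_1$ were shortcut by such a vertex of $K$, one should be able to assemble these bypasses — together with arcs of $C_1$ and $C_2$ and the prescribed avoiding paths — into a third hole of $G$, contradicting the hypothesis. One also has to treat separately the degenerate configurations, namely when $Q_1$ contains no edge away from the branch vertices (so $C_1$ and $C_2$ share all but one or two edges) and when $|E(C_1)\cap E(C_2)|=1$, where the claim $X_1\cap V(C_2)=\emptyset$ must be re-examined or the separator adjusted.
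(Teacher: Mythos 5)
Your decomposition scheme is the same as the paper's (split $G$ along the clique $X_1\cup\{v_j,v_{j+1}\}$, put the component $K$ containing $V(C_1)\setminus\{v_j,v_{j+1}\}$ into $G_1$ with the edge $v_jv_{j+1}$ deleted, take $G_2$ induced on the rest), and your reduction of ``$G_1$ has only one hole'' to ``no short bypass of $v_jv_{j+1}$ on the $K$-side'' is sound and mirrors the argument inside Lemma~\ref{noavoid2}. But the step you defer --- exhibiting an edge whose bypasses are not on the $K$-side --- is not a loose end; it is the entire content of the lemma, and your plan for it does not work as stated. First, note that the hypothesis forces \emph{every} edge $v_iv_{i+1}$ of $C_1$ to have a bypass vertex: a shortest $C_1$-avoiding $(v_i,v_{i+1})$-path of length at least $3$ would close up with $v_iv_{i+1}$ to a hole sharing exactly one edge with $C_1$, hence a third hole (it cannot be $C_2$ for your choice of edge on $Q_1$), so the shortest such path has length $2$ and its middle vertex lies outside $V(C_1)\cup X_1$ and is adjacent to both $v_i$ and $v_{i+1}$. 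So you cannot hope for an edge with ``no shortcut vertex''; you must prove that for some edge all its shortcut vertices are separated from $V(C_1)\setminus\{v_j,v_{j+1}\}$ by $X_1\cup\{v_j,v_{j+1}\}$, i.e.\ exactly the vertex-cut claim that occupies the bulk of the paper's proof. Your contrapositive idea (``assemble the $K$-side bypasses into a third hole'') is also not obviously available: a bypass vertex of an edge of $Q_1$ may simply be a vertex of $C_2$ adjacent to several vertices of $C_1$, which Lemma~\ref{wheel1} permits whenever the two holes share at least two edges, and then no third hole need arise even though this vertex sits in $K$ together with the rest of $C_1$.

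There are further unresolved points that the paper's different choice of $j$ is designed to avoid. Your prescription ``an edge of $Q_1$ not incident with a branch vertex'' need not exist (the path $E(C_1)\setminus E(C_2)$ can have only one or two edges), and your claims that such $v_j,v_{j+1}$ avoid $V(C_2)$ and that $X_1\cap V(C_2)=\emptyset$ when $|E(C_1)\cap E(C_2)|=1$ are asserted rather than proved ($V(C_1)\cap V(C_2)$ could a priori contain vertices off the shared path); you flag both but give no treatment. The paper instead takes $v_jv_{j+1}=v_0v_1$ inside the shared path when it has at least two edges, and $v_2v_3$ when exactly one edge is shared, and its long chord-chasing argument (via Lemma~\ref{wheel1}) that $X_1\cup\{v_j,v_{j+1}\}$ is a separating set uses this specific position --- e.g.\ the final contradiction in the case $k\ge 2$ rests on the two shared edges around $v_{j+1}$ lying in different sections of $C_1$, and in the case $k=1$ on $C_1,C_2$ sharing only one edge --- so it does not transfer to an edge in the interior of $Q_1$. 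In short, the skeleton is right, but the separation/vertex-cut argument (and the degenerate cases) constituting the heart of the proof is missing, and for your choice of edge it would need a new argument rather than the paper's.
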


\begin{proof}
By Lemma~\ref{path}, $G[E(C_1) \cap E(C_2)]$ is a path.
Without loss of generality, we may assume that
$G[E(C_1) \cap E(C_2)]=v_0v_1 \ldots v_k=w_0w_1 \ldots w_k$
for some integer $k \geq 1$.
We let
\[
j=\left\{
\begin{array}{cl}
2 & \text{if } k=1; \\
0 & \text{if } k \ge 2.
\end{array}
\right.
\label{eq8}
\]
Then $\{v_j, v_{j+1}\} \subseteq V(C_1) \setminus V(C_2)$ if $k=1$,
and $\{v_j, v_{j+1}\} \subseteq V(C_1) \cap V(C_2)$ if $k \geq 2$.
Let $L$ be a shortest $C_1$-avoiding $(v_j,v_{j+1})$-path. 
If $|E(L)| \ge 3$, then $L+ v_{j+1} v_j$ is a hole of $G$ sharing exactly
one edge with $C_1$, which is a contradiction. Thus 
$|E(L)|=2$ and so $L=v_jvv_{j+1}$ for some $v\in V(G)\setminus
V(C_1)$. Now we show that $v \not\in V(C_2)$ by contradiction.
Suppose that $v \in V(C_2)$. We first consider the case $k=1$. If
$v=w_{k+1}$, then $v$ is adjacent to two non-adjacent vertices
$v_k (=v_1)$ and $v_{j+1} (=v_3)$ in $V(C_1)$. By
Lemma~\ref{wheel1}, $v$ is in $X_1$ or $G$ has two holes which
have at least two common edges, and we reach a contradiction.
Therefore $v \neq w_{k+1}$. Then $v_{j}$ is adjacent to two
non-adjacent vertices $v_k$ and $v$ in $V(C_2)$, which is also a
contradiction. Thus $v \not\in V(C_2)$ in either case.

Now we will show that $X_1 \cup \{v_j,v_{j+1} \}$ is a vertex cut by
contradiction.
Suppose that $v$ is connected to a vertex in
$V(C_1) \setminus \{v_j,v_{j+1} \}$ by a $C_1$-avoiding path.
Let $v_{\ell}$ be
the first vertex on the $(v_{j+1},v_j)$-path
$C_1-v_jv_{j+1}$ such that there is a
$C_1$-avoiding $(v,v_{\ell})$-path, and
let $P$ be a shortest $C_1$-avoiding $(v,v_{\ell})$-path.
By Lemma~\ref{avoidingvertex},
$|E(P)| \ge 2$. In the following, we will
show that $v_{j+1}$ is adjacent to every internal
vertex on $P$.
Let $Q$ be the
$(v_{j+1},v_{\ell})$-section of $C_1$ which does not contain $v_j$.
Then $v_{j+1}PQ^{-1}$ is a cycle of length at least $4$
different from $C_1$.
Note that
$v_{j+1} \in V(v_{j+1}PQ^{-1})$ while
$v_{j+1} \not\in V(C_2)$ if $k=1$, and
that $v_j \in V(C_2)$ while $v_j \not\in V(v_{j+1}PQ^{-1})$ if $k \ge 2$.
Therefore $v_{j+1}PQ^{-1}$ is also different from $C_2$.
Thus $v_{j+1}PQ^{-1}$ cannot be a hole and so it has a chord.
By the choice of $v_{\ell}$,
no internal vertex of $Q$ is adjacent to any internal vertex of $P$.
Since $P$ is a shortest path,
any two non-consecutive vertices of $P$ are not adjacent.
In addition, since $Q$ is a part of a hole,
any two non-consecutive vertices are not adjacent.
Thus $v_{j+1}$ is adjacent to an internal vertex of $P$.
Let $x$ be the first internal vertex on
$P$ adjacent to $v_{j+1}$ and let $P'$ be the $(v,x)$-section of $P$.
Then $v_{j+1}P'v_{j+1}$ is a hole or a triangle.
However, if $k=1$, then $v_{j+1}P'v_{j+1}$ is different
from $C_1$ and $v_{j+1}$ is not on any hole other than $C_1$.
If $k \ge 2$, then
$v_j \in V(C_1) \cap V(C_2)$
but $v_j$ is not contained in $v_{j+1}P'v_{j+1}$.
Therefore $v_{j+1}P'v_{j+1}$ cannot be a hole
whether $k=1$ or $k \ge 2$.
Thus $v_{j+1}P'v_{j+1}$ is a triangle
and so $x$ immediately follows $v$ on $P$.
Now consider the cycle consisting of $v_{j+1}$,
the $(x,v_{\ell})$-section of $P$,
and $Q^{-1}$. If this cycle is a triangle, then we are done.
Otherwise, we apply the same argument to conclude that $v_{j+1}$
is adjacent to
the vertex immediately following $x$ on $P$.
By repeating this argument,
we can show that $v_{j+1}$ is adjacent to every internal
vertex on $P$. Then the cycle $C'$ consisting of $v_{j+1}$,
the vertex immediately
proceeding $v_{\ell}$ on $P$, $Q^{-1}$ is either a hole or a triangle.
If $k=1$, then $v_{j+1}$ is not on any hole other than $C_1$.
However, $C' \neq C_1$ and so $C'$ cannot be a hole.
If $k \ge 2$, then $v_j$ is not on $C'$
while it is on both $C_1$ and $C_2$, and so $C'$ cannot be a hole.
Thus $C'$ must be triangle and so $\ell=j+2$.

Let $y$ be the last vertex on $P$ that is adjacent to $v_j$. Such
$y$ exists since $v$ is adjacent to $v_j$. Let $P''$ be the
$(y,v_{j+2})$-section of $P$ and $C''$ be the cycle resulting from
deleting $v_{j+1}$ from $C_1$ and then adding path $P''$. Then
$|E(C'')| \ge 4$. If $k=1$, then it holds that $C'' \neq C_1$
since $v_{j+1} \not\in V(C'')$
and that $C'' \neq C_2$ since $v_j \in V(C'')$ and $v_j \not\in V(C_2)$.
If $k \ge 2$, then $C''$ is
different from both $C_1$ and $C_2$ since
$v_{j+1} \not\in V(C'')$.
Thus $C''$ cannot be a hole in either case and so
$C''$ has a chord.
Recall that any two non-consecutive vertices on
$P$ cannot be adjacent and that any two non-consecutive vertices in
$V(C')\cap V(C_1)=V(C_1) \setminus \{v_{j+1}\}$ cannot be
adjacent. Thus a vertex $u$ on $P''$ must be adjacent to a vertex
$v_r$ on $C''$  to form a chord if $k=1$ while a vertex $u$ on
$P''$ must be adjacent to a vertex $v_r \in V(C_1) \setminus
\{v_{j+1}\}$ if $k \ge 2$. Obviously $r \neq j+2$. Moreover, by
the choice of $u$, $r\neq j$. Then $u$ is adjacent to two
nonconsecutive vertices $v_{j+1}$ and $v_r$ on $C_1$. If $k=1$,
then, by Lemma~\ref{wheel1}, $u \in X_1$ or $G$ contains two holes
which have at least two common edges, either of which is a
contradiction. Now suppose that $k \ge 2$. Since $u \not\in X_1$,
by Lemma~\ref{wheel1}, $u$ is on $C_2$ and all the edges common to
$C_1$ and $C_2$ are contained in exactly one of the
$(v_{j+1},v_r)$-section of $C_1$. However, edges $v_jv_{j+1}$ and
$v_{j+1}v_{j+2}$ belong to distinct $(v_{j+1},v_r)$-sections of
$C_1$ even though they are shared by $C_1$ and $C_2$ by the
hypothesis. Thus we have reached a contraction.
Consequently, there is no $C_1$-avoiding path between $v$ and a
vertex in $V(C_1) \setminus \{v_j,v_{j+1}\}$. 
This implies that $X_1 \cup \{v_j,v_{j+1}\}$ is a vertex cut.

Now we define the subgraphs $G_1$ and $G_2$ of the graph $G$ as follows.
Let $Q$ be the component of $G-(X_1 \cup \{v_j,v_{j+1}\})$ that
contains $V(C_1) \setminus \{v_j,v_{j+1}\}$.
Let $G_2$ be the subgraph of $G$
induced by the vertex set $V(G) \setminus V(Q)$.
Then, since $v_0$ (resp. $v_2$) is a vertex in $V(C_1) \cap V(C_2) \cap V(Q)$ 
for $k=1$ (resp. $k \ge 2$), $C_2$ is not contained in
$G_2$ and so $G_2$ is chordal.
Let $G'_1$ be the subgraph induced by $V(Q) \cup X_1 \cup \{v_{j},v_{j+1}\}$.
Then $G_1'$ contains no $C_1$-avoiding $(v_{j},v_{j+1})$-path.
Therefore the subgraph $G_1 := G_1'-v_{j}v_{j+1}$
has exactly one hole by Lemma~\ref{noavoid2}.
By the definitions of $G_1$ and $G_2$,
we can check that $E(G_1) \cup E(G_2) = E(G)$
and $V(G_1) \cap V(G_2) = X_1 \cup \{v_j,v_{j+1}\}$.
Hence the lemma holds.
\end{proof}

Now, we are ready to complete the proof of the main theorem.

\begin{proof}[Proof of Theorem \ref{compthree}]
If $C_1$ and $C_2$ do not share an edge, then $k(G) \le 3$ by
Theorem~\ref{edge-dis}.
Thus we may assume that $C_1$ and $C_2$ share at least one edge.
By Lemma~\ref{path}, $G[E(C_1) \cap E(C_2)]$ is a path.
Suppose that there is no $C_1$-avoiding $(v_i,v_{i+1})$-path
for some $i \in \{0,\ldots,m-1\}$. Then $G_1:=G - v_i v_{i+1} $
has at most one hole by Lemma \ref{noavoid2} and so $k(G_1) \leq
2$ by Theorem~\ref{comptwo}. Let $G_2:=v_i v_{i+1}$. Then $G_2$ is
chordal, $E(G_1) \cup E(G_2) = E(G)$, and $V(G_1) \cap V(G_2)=
\{v_i, v_{i+1} \}$ is a clique of $G_2$. By Theorem
\ref{chordalpart}, we have $k(G) \leq 3$. 

Now we suppose that there is a $C_1$-avoiding $(v_i,v_{i+1})$-path
for any $i \in \{0,1,$ $\ldots, m-1\}$. 
By Lemma~\ref{avoid2},
$G$ has a subgraph $G_1$ which has exactly one hole
and an induced subgraph $G_2$ which is chordal
such that $E(G_1) \cup E(G_2) =E(G)$
and $V(G_1) \cap V(G_2)=X_1 \cup \{ v_j, v_{j+1} \}$
for some $j \in \{ 0, 1, \ldots, m-1 \}$.
Note that $X_1 \cup \{v_j,v_{j+1}\}$ is a clique of $G_2$.
By Theorem~\ref{comptwo}, we have $k(G_1) \le 2$.
Hence $k(G) \le 3$ by Theorem~\ref{chordalpart}.
\end{proof}




\begin{thebibliography}{99}

\bibitem{bo}
{J. A. Bondy and U. S. R. Murty}:
{\it Graph Theory with Applications},
(North Holland, New York, 1976).

\bibitem{ck}
{H. H. Cho and S. -R. Kim}:
{The competition number of a graph having exactly one hole},
{\it Discrete Math.} {\bf 303} (2005) 32--41.

\bibitem{co}
{J. E. Cohen}:
{Interval graphs and food webs: a finding and a problem},
{\it Document 17696-PR},
RAND Corporation, Santa Monica, CA (1968).

\bibitem{kimsu}
{S. -R.\ Kim}:
{The competition number and its variants},
{\it Quo Vadis, Graph Theory},
(J. Gimbel, J. W. Kennedy, and L. V. Quintas, eds.),
{\em Annals of Discrete Mathematics} {\bf 55},
North-Holland, Amsterdam (1993) 313--326.

\bibitem{compone}
{S. -R. Kim}:
{Graphs with one hole and competition number one},
{\it J. Korean Math. Soc.} {\bf 42} (2005) 1251--1264.

\bibitem{kr}
{S. -R. Kim and F. S. Roberts}:
{Competition numbers of graphs with a small number of triangles},
{\it Discrete Appl. Math.} {\bf 78} (1997) 153--162.


\bibitem{lky}
{S. -R. Kim, J. Y. Lee, and Y. Sano}:
{The competition number of a graph whose holes do not overlap much},
submitted.


\bibitem{LC}
{B. -J. Li and G. J. Chang}:
{The competition number of a graph with exactly $h$ holes,
all of which are independent},
{\it Discrete Appl. Math.} {\bf 157} (2009) 1337--1341.

\bibitem{lu}
{J. R. Lundgren}:
{Food webs, competition graphs,
competition-common enemy graphs, and niche graphs, in
Applications of Combinatorics and Graph Theory to the Biological
and Social Sciences},
{\it IMH Volumes in Mathematics and Its Application} {\bf 17}
Springer-Verlag, New York, (1989) 221--243.

\bibitem{op}
{R. J. Opsut}:
{On the computation of the competition number of a graph},
{\it SIAM J. Algebraic Discrete Methods} {\bf 3} (1982) 420--428.

\bibitem{RayRob}
{A. Raychaudhuri and F. S. Roberts}:
{Generalized competition graphs and their applications},
{\it  Methods of Operations Research},
{\bf 49} Anton Hain, K\"{o}nigstein, West Germany, (1985) 295--311.


\bibitem{cn}
{F. S. Roberts}:
{Food webs, competition graphs, and the boxicity of ecological phase space},
{\it Theory and applications of graphs
(Proc. Internat. Conf., Western Mich. Univ., Kalamazoo, Mich., 1976)}
(1978) 477--490.

\bibitem{book}
{F. S. Roberts}:
{\it Graph Theory and Its Applications to Problems of Society},
(SIAM, Pennsylvania, 1978).

\end{thebibliography}
\end{document}